\newtheorem{thm}{Theorem}[section]
\newtheorem*{thm1}{Theorem}
\newtheorem*{lem1}{Lemma}
\newtheorem{prop}[thm]{Proposition}
\newtheorem{lem}[thm]{Lemma}
\theoremstyle{definition}
\newtheorem{rem}{Remark}
\newtheorem{prob}[thm]{Problem}
\newcommand{\ra}{\rightarrow}
\newcommand{\bk}{\backslash}
\newcommand{\mc}{\mathcal}
\newcommand{\mb}{\mathbb}
\newcommand{\sg}{\sigma}
\newcommand{\eps}{\epsilon}
\renewcommand{\ss}{\substack}
\newcommand{\e}{\varepsilon}
\newcommand{\mbf}{\boldsymbol}
\renewcommand{\bar}{\overline}
\begin{document}
\title{On Shusterman's Goldbach-type problem for sign patterns of the Liouville function}
\author{Alexander P. Mangerel}
\address{Department of Mathematical Sciences, Durham University, Stockton Road, Durham, DH1 3LE, UK}
\email{smangerel@gmail.com}

\begin{abstract}
Let $\lambda$ be the Liouville function. Assuming the Generalised Riemann Hypothesis for Dirichlet $L$-functions (GRH), we show that for \emph{every} sufficiently large even integer $N$ there are $a,b \geq 1$ such that 
$$
a+b = N \text{ and } \lambda(a) = \lambda(b) = -1. 
$$
This conditionally answers an analogue of the binary Goldbach problem for the Liouville function, posed by Shusterman. \\
The latter is a consequence of a quantitative lower bound on the frequency of sign patterns attained by $(\lambda(n),\lambda(N-n))$, for sufficiently large primes $N$. We show, assuming GRH, that there are constants $C,N_0 > 0$ such that for each pattern $(\eta_1,\eta_2) \in \{-1,+1\}^2$ and each prime $N \geq N_0$,
$$
|\{n < N : (\lambda(n),\lambda(N-n)) = (\eta_1,\eta_2)\}| \gg N e^{-C(\log \log N)^{6}}.
$$
The proof makes essential use of the Pierce expansion of rational numbers $n/N$, which may be of interest in other binary problems.
\end{abstract}
\maketitle
\section{Introduction}
\subsection{Main results}
The binary Goldbach conjecture asserts that every even integer $N > 2$ is representable as $N = p+q$, where $p$ and $q$ are prime. Equivalently, for every even $N > 2$ we expect the non-trivial lower bound
\begin{equation}\label{eq:Lambda1correl}
\sum_{1 \leq n < N} 1_{\mb{P}}(n) 1_{\mb{P}}(N-n) > 0
\end{equation}
to hold, writing $\mb{P}$ to denote the set of all primes. This famous claim remains out of reach of modern methods, even assuming the Generalised Riemann Hypothesis (GRH). \\
Inspired by the heuristic relationship between the distribution of primes and the Liouville function $\lambda$, it is natural to consider analogous problems about the behaviour of pairs $(\lambda(n),\lambda(N-n))$, where $1 \leq n < N$. Here, in contrast to the case of primes, there is no parity obstruction to the value distribution of $\lambda(N-n)$, so we need not restrict ourselves to $N$ even. Moreover, there are several different analogous problems that can be posed in this direction. \\
The first concerns obtaining the simplest non-trivial bound on the correlation 
$$
\mc{L}_\lambda(N) := \sum_{1 \leq n < N} \lambda(n) \lambda(N-n),
$$
namely $|\mc{L}_{\lambda}(N)| < N-1$. In our previous paper \cite{Man} we obtained this bound for all $N \geq 11$ by elementary arguments. \\
The next analogous problem to ask is about the \emph{sign patterns} i.e., pairs in $\{-1,+1\}^2$ that are attained by $(\lambda(n),\lambda(N-n))$, for $1 \leq n < N$. The result of \cite{Man} showed that the products $\lambda(n)\lambda(N-n)$ must change sign along $1 \leq n < N$, which implies the existence of integers $1 \leq a,b < N$ such that
$$
\lambda(a) = \lambda(N-a) \text{ and } \lambda(b) = -\lambda(N-b).
$$
Using the involution $b\mapsto N-b$, we find that both the patterns\footnote{Here and elsewhere we abbreviate $+1$ and $-1$ as simply $+$ and $-$.} $(+,-)$ and $(-,+)$ appear among $(\lambda(n),\lambda(N-n))$; however, it is unclear from this which of $(+,+)$ and/or $(-,-)$ is attained by $(\lambda(n),\lambda(N-n))$. \\
In this direction, Shusterman \cite{ShusMO} had earlier asked the following problem.
\begin{prob}[Shusterman] \label{prob:shus}
Is it true that for every \emph{even} integer $N > 2$ there exist positive integers $a,b$ such that $a+b = N$ and $\lambda(a) = \lambda(b) = -1$?
\end{prob}
Shusterman's problem is a weaker version of the binary Goldbach problem, as if $N$ is an even integer such that $N = p+q$ with $p,q$ prime then $\lambda(p) = \lambda(q) = -1$. For that matter, however, we may ask whether the same property holds for \emph{odd} integers $N$ as well, and whether for any such $N$ the sign pattern $(+,+)$, in addition to $(-,-)$, must occur. \\
It is well-known that a resolution of the binary Goldbach problem via sieve methods alone seems to fail due to the \emph{parity problem}, namely that sieves are unable to resolve the parity of the number of prime factors of sifted integers (without implementing additional parity-breaking techniques of the kind introduced by Friedlander and Iwaniec, Harman and others). Naturally, Shusterman's problem offers the same challenge. \\
In this note we address problems like Problem \ref{prob:shus}, conditionally on the GRH for Dirichlet $L$-functions (see Section \ref{subsec:rems} below for a discussion concerning this hypothesis, and how it may be weakened). As mentioned, GRH
is not known to imply the validity of the binary Goldbach conjecture.
\begin{thm}\label{thm:Shus}
Assume GRH. Then there is an effectively computable $N_0$ such that if $N \geq N_0$ is even then we can find $1 \leq a,b < N$ such that $a+b = N$ and $\lambda(a) = \lambda(b) = -1$.
\end{thm}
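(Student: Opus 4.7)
The plan is to derive Theorem \ref{thm:Shus} from the quantitative sign pattern theorem for primes stated in the abstract, via the complete multiplicativity of the Liouville function.

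Given an even integer $N$ with a prime divisor $p$, I would factor $N = mp$ and, for each $n \in [1, p-1]$, consider the candidate pair $(a,b) = (mn, m(p-n))$, which satisfies $a + b = N$. Complete multiplicativity of $\lambda$ gives
$$
(\lambda(a), \lambda(b)) = \lambda(m) \cdot (\lambda(n), \lambda(p-n)).
$$
Provided $p \geq N_0'$, the threshold appearing in the sign pattern theorem, that theorem produces $\gg p \exp(-C(\log\log p)^{6})$ many $n$ attaining any prescribed pattern $(\eta_1, \eta_2) \in \{-1,+1\}^2$ for $(\lambda(n), \lambda(p-n))$. Taking $(\eta_1, \eta_2) = (-\lambda(m), -\lambda(m))$ then produces pairs $(a,b)$ with $\lambda(a) = \lambda(b) = -1$. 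This disposes of every even $N$ whose largest prime factor $P^+(N)$ is at least $N_0'$, which is the generic case.

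The main obstacle is the remaining class of $N_0'$-smooth even $N$, a sparse but infinite family including all $N = 2^k$. For these, my plan is to iterate the above reduction: at each step, extracting a prime factor $p$ multiplies the required pattern in each coordinate by $\lambda(p) = -1$, reducing the problem to finding a specified sign pattern for sums to some smaller divisor $M$ of $N$. A direct verification shows that once $M$ is a modestly large divisor of $N$ of suitable shape (already $M = 8$ attains all four patterns, and similarly for $M \in \{16, 18, 27, \ldots\}$), all four sign patterns occur among $(\lambda(n), \lambda(M-n))$ for $n \in [1, M-1]$, so the iteration can be halted at such an $M$ and the desired pattern verified by inspection. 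The hard part will be executing this iteration and base-case verification uniformly for all smooth $N \geq N_0$, so as to produce an effectively computable $N_0$; alternatively, the Pierce expansion methods stressed in the abstract may yield a version of the sign pattern theorem for composite denominators, in which case the smooth-case analysis becomes unnecessary.
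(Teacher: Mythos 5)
Your treatment of the generic case is correct and matches the paper: when $N$ has a prime factor $p$ exceeding the threshold, writing $N=mp$ and pulling out $\lambda(m)$ by complete multiplicativity reduces the problem to realising the pattern $(-\lambda(m),-\lambda(m))$ for the prime $p$, which the quantitative sign-pattern result supplies. This is exactly how the paper deduces its Theorem \ref{thm:LGcond} and hence case (i) of Theorem \ref{thm:Shus}.

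The gap is in the friable case, and it is a genuine one rather than a routine verification. Your plan — iterate the reduction until you reach a divisor $M$ of $N$ attaining all four sign patterns, then check finitely many base cases — requires that \emph{every} sufficiently large $p_0$-friable even $N$ possess such a divisor. Since the friable $N$ include numbers of the form $2p^k$ and $4p^k$ for each prime $p\le p_0$, this amounts to showing that for every small prime $p$ some power $p^{k_p}$ admits all four patterns; the paper states explicitly (in the remarks of Section \ref{subsec:rems}) that it was unable to prove precisely this claim, and since $p_0$ is only effectively computable rather than explicit, the ``verification by inspection'' cannot be completed even in principle without it. The paper circumvents the obstruction by exploiting evenness and targeting only the pattern $(-,-)$: if $8\mid N$ one uses $8=3+5=4+4$ to cover both values of $\lambda(N/8)$; if $8\nmid N$ and $\lambda(N)=+1$ one takes $a=b=N/2$; and in the remaining case $\lambda(N)=-1$ one writes $N=2^kM^2N'$ with $M^2$ the largest odd square divisor, notes that $2^kN'$ is bounded in terms of $p_0$ so that $M$ is large, and applies the \emph{unconditional} theorem of \cite{Man} to the odd number $M$ to find $n<M/2$ with $\lambda(n)=\lambda(M-n)$; setting $d=M-2n$ gives $\lambda(M^2-d^2)=+1$, so the representation $M^2=(M^2-d^2)+d^2$ realises $(+,+)$, and multiplying through by $2^kN'$ (which has $\lambda=-1$) yields $(-,-)$ for $N$. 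This difference-of-squares device is the missing idea; without it, or some substitute of comparable strength, your argument does not close. Your alternative suggestion of extending the sign-pattern theorem to composite denominators is also not available: the paper's Proposition \ref{prop:keyLowerBd} is proved only for prime $N$.
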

The latter result is a consequence of the more general Theorem \ref{thm:LGcond} below.
Before stating it we recall that given $y \geq 2$, an integer $n \in \mb{N}$ is called \emph{$y$-friable} (or \emph{$y$-smooth}) if $p|n$ and $p \in \mb{P}$ then $p \leq y$.
\begin{thm} \label{thm:LGcond}
Assume GRH. Let $N$ be a positive integer. Then there is an effectively computable $p_0 \geq 2$ such that either: 
\begin{enumerate}
\item $N$ is $p_0$-friable, or
\item for every $(\eta_1,\eta_2) \in \{-1,+1\}^2$ there is a positive integer $1 \leq a < N$ such that
\begin{align*}
(\lambda(a), \lambda(N-a)) = (\eta_1, \eta_2).
\end{align*}
\end{enumerate}
In particular, case (2) occurs for all sufficiently large\footnote{The word ``squarefree'' here may of course be replaced by ``$k$th power free'', for any $k \geq 2$.} squarefree integers $N$.
\end{thm}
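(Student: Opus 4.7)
The plan is to reduce the statement to a quantitative cancellation bound on the correlation $\mc{L}_\lambda(N)$. Expanding $\tfrac14\sum_{a=1}^{N-1}(1+\eta_1\lambda(a))(1+\eta_2\lambda(N-a))$ yields
$$
|\{1 \leq a < N : (\lambda(a), \lambda(N-a)) = (\eta_1,\eta_2)\}| = \tfrac14\bigl((N-1) + (\eta_1+\eta_2)M(N-1) + \eta_1\eta_2\,\mc{L}_\lambda(N)\bigr),
$$
where $M$ is the Mertens function. Under GRH one has $|M(N-1)|\ll N^{1/2+\varepsilon}$, so alternative (2) will follow provided we can upgrade the strict but unquantified bound $|\mc{L}_\lambda(N)| < N-1$ of \cite{Man} to a quantitative gap $|\mc{L}_\lambda(N)| \leq (N-1) - c\,N^{1/2+\varepsilon}$ for some $c > 0$.

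To exploit a prime $p \mid N$ with $p > p_0$, I would split $\mc{L}_\lambda(N)$ by whether $p \mid n$. Since $p \mid N$, the conditions $p \mid n$ and $p \mid N-n$ coincide, and complete multiplicativity gives the diagonal identity
$$
\sum_{\substack{1 \leq n < N \\ p \mid n}} \lambda(n)\lambda(N-n) = \mc{L}_\lambda(N/p),
$$
controlled trivially by $N/p - 1$. The substantive task is to bound the off-diagonal sum $\Sigma_p = \sum_{p\nmid n}\lambda(n)\lambda(N-n)$ by $\ll N^{1/2+\varepsilon}$; combined with the diagonal this yields
$$
|\mc{L}_\lambda(N)| \leq (N/p - 1) + O(N^{1/2+\varepsilon}) = (N-1) - N(1-1/p) + O(N^{1/2+\varepsilon}),
$$
which produces the required gap once $p_0$ is chosen large enough. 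The Pierce expansion enters precisely in estimating $\Sigma_p$: the expansion $n/N = 1/a_1 - 1/(a_1 a_2) + \cdots$ (with $a_1 < a_2 < \cdots$) terminates exactly when a remainder in its defining recursion divides $N$, producing for each $n/N$ a canonical sequence of rational approximants $A_k/B_k$ with $B_k \mid N$. I would partition $\Sigma_p$ into blocks indexed by the terminal denominator $d \mid N$; on each block $N-n$ is tightly controlled modulo $d$, so $\lambda(N-n)$ can be expanded via Dirichlet characters mod $d$, and the resulting sums $\sum_n \lambda(n)\chi(n)$ bounded using GRH.

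The main obstacle will be to make precise the heuristic that Pierce blocks partition the $n$ coprime to $p$ into ranges on which $\lambda(N-n)$ behaves like a character of $n$ modulo a divisor of $N$, and to sum the character-sum errors uniformly across the depth of the expansion --- that depth is ultimately what produces the $(\log\log N)^{O(1)}$ loss reflected in the quantitative form stated in the abstract. The ``squarefree'' (and ``$k$th power free'') addendum is then immediate, since any sufficiently large such $N$ must contain a prime factor exceeding $p_0$.
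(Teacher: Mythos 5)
Your reduction of alternative (2) to a quantitative gap $(N-1)-|\mc{L}_{\lambda}(N)| \gg N^{1/2+\e}$ is sound as far as it goes (this is exactly how the paper deduces the sign-pattern count from Proposition \ref{prop:keyLowerBd}), but the way you propose to produce the gap does not work, and it bypasses the actual content of the paper. The claim that the off-diagonal sum $\Sigma_p = \sum_{p\nmid n}\lambda(n)\lambda(N-n)$ is $\ll N^{1/2+\e}$ under GRH is essentially a binary Chowla-type assertion with square-root cancellation; nothing of this strength is known, conditionally or otherwise, and establishing \emph{any} nontrivial saving over the trivial bound for such a correlation is the entire difficulty here. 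Indeed, the paper only achieves a saving of $N e^{-C(\log\log N)^{6}}$, and only for \emph{prime} $N$. Your mechanism for $\Sigma_p$ --- that the Pierce expansion of $n/N$ terminates at denominators $B_k \mid N$, partitioning the $n$ into blocks on which $\lambda(N-n)$ behaves like a Dirichlet character mod a divisor of $N$ --- is not correct: the partial denominators of the Pierce expansion are products $r_1\cdots r_j$ of the digits, not divisors of $N$, and GRH controls sums $\sum_n \lambda(n)\chi(n)$ but gives no direct handle on the genuinely binary object $\lambda(n)\lambda(N-n)$. The paper's use of the Pierce expansion is entirely different: it is a combinatorial device (for \emph{prime} $N$) to prove a recursive bound $|\mc{E}_d(N)| \leq 2^{d^2}|\mc{E}(N)|$ on the exceptional sets measuring failure of the dilation symmetry $S_{\lambda}(da/N)\approx\lambda(d)S_{\lambda}(a/N)$, which is then amplified via exponential sums over friable numbers and contradicted, on average over primes $p\sim P$ with $P \geq (\log N)^{2+\e}$, by a GRH lower bound for $|\mc{E}_p(N)|$.

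You are also missing the step that makes the composite case tractable at all. The paper does \emph{not} prove a correlation bound for composite $N$; it proves Proposition \ref{prop:keyLowerBd} for prime modulus (primality of $N$ is used throughout: bijectivity of the maps $\phi_d$, non-vanishing of $\theta(n)$, orthogonality of characters mod $N$), and then transfers to a non-$p_0$-friable integer $N$ by pure multiplicativity: choose a prime $p\mid N$ with $p>p_0$, write $N=pN'$, pick $1\leq b<p$ realising the pattern $(\eta\eps_1,\eta\eps_2)$ for the prime $p$ with $\eta=\lambda(N')$, and set $a=bN'$, so that $\lambda(a)=\eps_1$ and $\lambda(N-a)=\lambda(N'(p-b))=\eps_2$. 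This scaling step is short but essential, and without it (or a genuine proof of your $\Sigma_p$ bound, which is out of reach) the argument does not close.
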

Theorem \ref{thm:LGcond} asserts that each of the sign patterns in $\{-1,+1\}^2$ occurs at least once in the sequence of pairs $(\lambda(n),\lambda(N-n))$, $1 \leq n < N$. This theorem follows from a significantly stronger quantitative assertion on the number of such sign patterns, when $N$ is a sufficiently large \emph{prime}. \\
As mentioned, the main result of \cite{Man} yields $|\mc{L}_{\lambda}(N)| < N-1$.
The proof of Theorem \ref{thm:LGcond} is based on the following key proposition that strengthens this last condition. 
\begin{prop} \label{prop:keyLowerBd}
Assume GRH. Then there is a constant $C > 0$ such that for any prime $N$,
$$
|\mc{L}_{\lambda}(N)| \leq  N - Ne^{-C(\log \log N)^{6}}.
$$
Consequently, for each pair $(\eta_1,\eta_2) \in \{-1,+1\}^2$ there are\footnote{Note that we have not attempted to find the optimal power of $\log\log N$ here, and that the exponent $6$ here could be improved somewhat; however, our argument currently cannot be used to bring it down below $1$.} $\gg  Ne^{-C(\log \log N)^{6}}$ integers $1 \leq n < N$ such that $(\lambda(n) ,\lambda(N-n)) = (\eta_1,\eta_2)$.
\end{prop}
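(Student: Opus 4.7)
The second assertion of the proposition follows from the main estimate by a short linear-algebra argument: under GRH, $\sum_{n < N} \lambda(n)$ and $\sum_{n < N} \lambda(N-n)$ are both $O(N^{1/2+\e})$, and together with $\sum_{n < N} \lambda(n)\lambda(N-n) = \mc{L}_\lambda(N)$ and $\sum_{n<N} 1 = N-1$ these four linear relations determine the four sign-pattern counts as $\tfrac{1}{4}(N \pm \mc{L}_\lambda(N)) \pm O(N^{1/2+\e})$; each is thus $\gg Ne^{-C(\log\log N)^{6}}$ once the main estimate is proved. The remaining task is to show $|\mc{L}_\lambda(N)| \leq N - Ne^{-C(\log\log N)^{6}}$. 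Assume WLOG $\mc{L}_\lambda(N) \geq 0$; setting $f(n) := \lambda(n)\lambda(N-n)$ and $\mc{G}^c := \{n \in [1,N-1]: f(n) = -1\}$, one wants $|\mc{G}^c| \gg Ne^{-C(\log\log N)^{6}}$.

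The key device is the Pierce expansion of $n/N$. For each $1 \leq n < N$ write
\[
\frac{n}{N} = \frac{1}{a_1} - \frac{1}{a_1 a_2} + \cdots + \frac{(-1)^{k-1}}{a_1 a_2 \cdots a_k}, \quad 1 < a_1 < \cdots < a_k = N,
\]
with associated remainder sequence $s_0 = n, s_1, \ldots, s_{k-1} = 1, s_k = 0$ defined by $s_i = N \bmod s_{i-1}$ and $a_i = \lfloor N/s_{i-1}\rfloor$, so that $N - s_i = a_i s_{i-1}$. Complete multiplicativity of $\lambda$ gives the fundamental identity $\lambda(N - s_i) = \lambda(a_i)\lambda(s_{i-1})$. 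If $f(s_i) = 1$ for every $1 \leq i \leq d$ (a chosen depth), telescoping together with $\lambda(s_0) = \lambda(n)$ produces the iterative identity
\[
\lambda\bigl(n \cdot s_d(n)\bigr) = \lambda\bigl(a_1(n) a_2(n) \cdots a_d(n)\bigr) =: \lambda(\Pi_d(n)),
\]
so that $\lambda(n\,s_d(n)\,\Pi_d(n)) = 1$ for every $n$ whose truncated Pierce chain stays in $\mc{G} := [1,N-1]\setminus \mc{G}^c$.

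Suppose for contradiction $|\mc{G}^c| \leq \delta N$ and take $d := \lfloor(\log\log N)^5\rfloor$. A fiber-counting argument, using that $\#\{n : s_i(n) = m\} \leq d(N-m)$ at each depth $i \geq 1$ together with typical divisor bounds $d(N-m) \ll (\log N)^{O(1)}$, shows that the truncated chain $(s_1(n), \ldots, s_d(n))$ avoids $\mc{G}^c$ for all but $O(\delta N (\log N)^{O(d)}) = O(\delta N e^{O((\log\log N)^{6})})$ many $n$; on the remaining $\gtrsim N$ values of $n$ the identity gives $\sum_n \lambda(n\,s_d(n)\,\Pi_d(n)) \gtrsim N$. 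On the other hand, stratifying $n$ by its initial Pierce signature $(a_1,\ldots,a_d)$ and applying GRH within each stratum — where the sum reduces to $\lambda$-sums along short progressions modulo products of the Pierce digits — yields an upper bound $O(N e^{-C'(\log\log N)^{6}})$ on the same sum, producing the contradiction $\delta \gg e^{-C(\log\log N)^{6}}$. The main obstacle is this GRH step: the product $n\,s_d(n)\,\Pi_d(n)$ is not a conventional multiplicative input to GRH, and extracting cancellation requires controlling the $(\log N)^{O(d)}$ Pierce digit signatures and balancing the resulting fiber losses against the square-root GRH error terms. The accumulated losses produce the exponent $6 = 5 + 1$, which (as the footnote indicates) is not expected to be optimal.
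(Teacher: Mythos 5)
Your deduction of the second claim from the first is fine and matches the paper's (the paper uses the unconditional prime number theorem for $\sum_{n<N}\lambda(n)$ rather than GRH, but either works). The problem is the first claim. You have correctly identified the Pierce expansion of $n/N$ and the telescoping identity $\lambda(s_d)=\lambda(a_1\cdots a_d)\lambda(n)$ (valid when the chain avoids the exceptional set) as the key device, and this machinery does appear in the paper (Lemmas \ref{lem:recurrence} and \ref{lem:preimages}). But your argument collapses at the step where you invoke GRH. Within a stratum of fixed Pierce signature $(a_1,\dots,a_d)$ the product $\Pi_d(n)$ is constant and, by the analogue of Lemma \ref{lem:preimages}, $s_d(n)=\pm(\Pi_d\, n - P N)$ is an affine function of $n$; so the sum $\sum_n\lambda(n\,s_d(n)\,\Pi_d(n))$ you need to bound is, up to a fixed sign, a binary correlation $\sum_n\lambda(n)\lambda(\Pi_d n - PN)$ --- exactly the type of sum (of which $\mc{L}_\lambda(N)$ itself is an instance) that GRH gives no handle on and that the entire paper is devoted to bounding. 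As you yourself concede, this is ``the main obstacle''; it is not a technicality to be absorbed into the constants but the whole difficulty, and the step as written is essentially circular. A secondary gap: your fiber bound $\#\{n:s_i(n)=m\}\leq d(N-m)$ is correct for a single step, but iterating over $d$ steps with unbounded Pierce digits produces losses governed by divisor sums that are only \emph{typically} $(\log N)^{O(1)}$, not uniformly so; the paper sidesteps this by only ever iterating the Pierce map in the truncated regime where every digit $r_j$ stays below a fixed prime $p$, which yields the clean combinatorial bound $\nu_r(m)\leq 2^{r-1}$ with no appeal to divisor statistics.

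For comparison, the paper's route to the contradiction is structurally different. The Pierce machinery is used only to prove a purely combinatorial recursion $|\mc{E}_p(N)|\leq 2^{p^2}|\mc{E}(N)|$ for the dilated exceptional sets $\mc{E}_d(N)$ with $d$ friable (Proposition \ref{prop:recur}); crucially this carries no $o(N)$ error term. A reciprocity identity between $\mc{E}_a(N)$ and $\mc{E}_b(N)$ (Lemma \ref{lem:shiftEb}), combined with exponential-sum estimates over friable numbers (Harper, Fouvry--Tenenbaum) and the Erd\H{o}s--Tur\'an inequality, then extends the smallness of $|\mc{E}_r(N)|$ to all $r$ up to roughly $e^{q^{1/5}}$, which reaches past the threshold $(\log N)^{2+\e}$. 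GRH enters only through character sums over primes modulo $N$: by orthogonality, the average of $|\mc{E}_p(N)|$ over $p\sim P$ is $\sim N/2$ once $P\geq(\log N)^2(\log\log N)^{2+\e}$ (Proposition \ref{prop:GRHavg}), and the clash between this lower bound and the extended upper bound gives the proposition. Any repair of your argument would need to replace the ``GRH within each stratum'' step by an input of this kind.
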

\subsection{Remarks on the main theorems} \label{subsec:rems}
\begin{rem}\label{rem:GRH}
In the proofs of both Theorem \ref{thm:Shus} and Theorem \ref{thm:LGcond} we do not actually need the full force of GRH. In fact, a zero-free region of the shape
$$
\left\{s \in \mb{C} : \text{Re}(s) > 1-\frac{1}{(\log N)^c}, \, |\text{Im}(s)| \leq (\log N)^3\right\},
$$
where $c \in (0,3/50)$, would suffice for our purposes (albeit giving a weaker lower bound for the frequency of each sign pattern in Proposition \ref{prop:keyLowerBd}). See Remark \ref{rem:GRHimp} below for a proof of this.
\end{rem}
\begin{rem}
There has been much recent progress in studying the frequency and distribution of sign patterns of the Liouville function (and indeed other real-valued, non-pretentious multiplicative functions) as a consequence of the work of Matom\"{a}ki, Radziwi\l\l, Tao, Ter\"{a}v\"{a}inen and others on, in particular, correlations of non-pretentious multiplicative functions, see e.g., \cite{Tao}, \cite{TT}, \cite{MR1}, \cite{MRT}, \cite{MRTSigns}, \cite{MRTTZ}.  Consequently, in regards to patterns of pairs such as $(\lambda(n),\lambda(n+h))$, where $h \geq 1$ is \emph{fixed} or \emph{slowly-growing} and $n < N$ is varying, much more is known unconditionally than what we prove here. However, the techniques used towards controlling binary (or higher order) correlations of bounded multiplicative functions with \emph{fixed} or \emph{slowly-growing} shifts $h$ do not seem to be sufficient in order to deal with shifts of size as large\footnote{One difficulty that arises here is that, unlike in the case of fixed shifts $h \geq 1$, where the integers $n$ and $n+h$ are asymptotically of the same scale, the integers $n$ and $N-n$ are of vastly different scales almost everywhere in logarithmic density. Thus, logarithmic averaging seems to offer little benefit here, whereas in contrast it was crucial in obtaining an additional prime variable in which to average over in \cite{Tao} and all of the work that this later inspired.} as $N$, the length of the interval of summation. Thus, the work we do here is not superseded by any of that existing literature. 
\end{rem}
\begin{rem}
The proof shows that $N_0$ in Theorem \ref{thm:Shus} can be computed effectively in terms of $p_0$, the parameter arising in Theorem \ref{thm:LGcond}. Therefore, assuming such a $p_0$ is not too large, there might be a hope of numerically checking all remaining even values $2 < N < N_0$. We leave this matter to the interested reader.
\end{rem}
\begin{rem}
In contrast to Theorem \ref{thm:LGcond}, the evenness of $N$ in Theorem \ref{thm:Shus} makes it simpler to find the sign pattern $(-,-)$, even when $N$ is $p_0$-friable. For example, if $\lambda(N) = +1$ then we immediately have the pattern $(-,-)$ arising from $a = b = N/2$, regardless of whether or not $N$ is $p_0$-friable. \\
In order to extend Theorem \ref{thm:LGcond} to all sufficiently large integers, it would be sufficient to show that for every fixed prime $p$ there is a $k = k_p \geq 1$ such that $p^k$ admits pairs $(a_i,b_i)$, $1 \leq i \leq 4$, yielding each of the patterns in $\{-1,+1\}^2$. In this case, any $N > \prod_{p \leq p_0} p^{k_p}$ would either have a prime factor $p > p_0$, or else a prime power factor $p^\nu$ with $p \leq p_0$ and $\nu > k_p$, and in both cases an appropriate choice of pairs $(a,b)$ for $N$ and each sign pattern $\mbf{\eta} \in \{-1,+1\}^2$ would necessarily exist. We have not as yet been able to prove such a claim.
\end{rem}

\section{Strategy of proof} 
\subsection{Approximate dilation symmetry for Fourier coefficients of $\lambda$}
Let $N$ be a large prime. To prove an upper bound on $|\mc{L}_{\lambda}(N)|$ we obtain lower bounds on both of the expressions $N-1 \pm \mc{L}_{\lambda}(N)$. Observe that for each $\eta \in \{-1,+1\}$,
\begin{equation} \label{eq:deltaUppBd}
N-1 + \eta \mc{L}_{\lambda}(N) = \sum_{n < N} (1 + \eta \lambda(n)\lambda(N-n)) =  2|\{n < N : \lambda(n) \lambda(N-n) = \eta\}|.
\end{equation}
We therefore denote by $\mc{E}(N)$ the set such that 
$$%
|\mc{E}(N)| = \min_{\eta \in \{-1,+1\}} |\{n < N : \lambda(n) = \eta \lambda(N-n)\}|
$$
(breaking ties arbitrarily), and in the sequel we fix $\eta \in \{-1,+1\}$ to be the corresponding sign minimising the right-hand side. In proving Proposition \ref{prop:keyLowerBd} we will relate the size of $\mc{E}(N)$ with\footnote{As usual, given $t \in \mb{R}$ we write $e(t) := e^{2\pi i t}$.}
$$
S_{\lambda}(a/N) := \sum_{n < N} \lambda(n) e(na/N), \quad a \pmod{N},
$$
which are the Fourier coefficients of the map $n \mapsto \lambda(n) 1_{(0,N)}$, defined on $\mb{Z}/N\mb{Z}$. In our previous paper \cite{Man} we observed (see Lemma 3.1 there) that if $|\mc{L}_{\lambda}(N)| = N-1$ then these Fourier coefficients satisfy the dilation property
\begin{equation}\label{eq:GaussSumLike}
S_{\lambda}(ar/N) = \lambda(r) S_{\lambda}(a/N) \text{ for all } 1 \leq R < N,
\end{equation}
reminiscent of the symmetries of the Gauss sums of the Legendre symbol modulo $N$. This rigidity helped us obtain (by Fourier inversion $\pmod{N}$) a contradiction, as $\lambda$ does not behave like\footnote{The precise incarnation of this admittedly vague statement that is used in \cite{Man} is that if $N>5$ is prime then the Legendre symbol mod $N$ has a \emph{prime quadratic residue} $q < N$, whereas $\lambda(q) = -1$.} a Dirichlet character. \\
In the same direction, a crucial input to the proof of Proposition \ref{prop:keyLowerBd} is the following approximate version of \eqref{eq:GaussSumLike}, assuming some information about $\mc{E}(N)$.
\begin{prop} \label{prop:approxDil}
Let $3 \leq q \leq \sqrt{\log N}$ and let $T \leq e^{\tfrac{1}{40}q^{1/5}}$. Then for every integer $1 \leq r \leq T$,
$$
\frac{1}{N}\sum_{a \pmod{N}} \left|S_{\lambda}(ra/N) -\lambda(r)S_{\lambda}(a/N)\right|^2 \ll 2^{(1+o(1)) q^2} |\mc{E}(N)| + NT^{-1+o(1)}.
$$
\end{prop}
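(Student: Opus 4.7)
The first step is Parseval's identity on $\mb{Z}/N\mb{Z}$. In the intended applications $N$ is prime and $r \leq T < N$, so $\gcd(r,N)=1$ and the change of variable $m \equiv rk \pmod N$ is a bijection on $\{1,\dots,N-1\}$; combined with the multiplicativity $\lambda(rk) = \lambda(r)\lambda(k)$ and the elementary identity $|\lambda(a) - \lambda(b)|^2 = 2(1 - \lambda(a)\lambda(b))$, one obtains
\[
\frac{1}{N}\sum_{a \bmod N} \bigl|S_\lambda(ra/N) - \lambda(r)S_\lambda(a/N)\bigr|^2 = 2\sum_{j=1}^{r-1}\sum_{\substack{1 \leq m < N \\ m \equiv -jN \,(r)}} \bigl(1 - \lambda(m)\lambda(jN+m)\bigr).
\]
It therefore suffices to bound, for each $1 \leq j \leq r-1$, the number of $m \in (0,N)$ satisfying the stated congruence for which $\lambda(m) \neq \lambda(jN+m)$.

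To link this count with $\mc{E}(N)$, I would exploit its defining property: $\lambda(n) = -\eta\lambda(N-n)$ for every $n \in (0,N)\setminus\mc{E}(N)$. The plan is to compare $\lambda(m)$ and $\lambda(jN+m)$ through an iterative \emph{reflect-and-factor} algorithm. At each step, one applies the reflection $n \mapsto N-n$ (permissible whenever $n \notin \mc{E}(N)$) to one of the two arguments, and then strips a small prime divisor from $N-n$ via multiplicativity, reducing to a comparison of the same form but at smaller scales. This walk is governed precisely by the Pierce expansion of $m/N$ (or dually of $j/r$), whose digits record the successive Euclidean-type quotients---this is the very role of the Pierce expansion advertised in the abstract.

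One then splits the bad pairs by the length $L$ of the associated Pierce expansion. For pairs with $L \leq L_0$, with $L_0$ chosen proportional to $q^2$, a union bound over the at most $2^{L_0}$ trajectories of the walk (each of which can be bad only if one of its $\leq L_0$ steps lands in $\mc{E}(N)$) gives a contribution $\ll 2^{L_0} L_0\, |\mc{E}(N)|$, producing the $2^{(1+o(1))q^2}|\mc{E}(N)|$ term. For pairs with $L > L_0$, one invokes classical bounds on the rarity of rationals whose Pierce expansions are unusually long (going back to work of Erd\H{o}s, R\'enyi and Sz\"usz); combined with the constraint $r \leq T \leq e^{q^{1/5}/40}$, this yields the residual $NT^{-1+o(1)}$.

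The main obstacle will be carrying out the reflect-and-factor iteration cleanly: one must ensure at each step that the prime peeled off from $N-n$ is genuinely small, so the walk does not jump scales and the inductive framework is preserved, and that the number of distinct trajectories is only $2^{O(L)}$ rather than something like $r^{O(L)}$. The specific ranges $q \leq \sqrt{\log N}$ and $T \leq e^{q^{1/5}/40}$ emerge from balancing the binomial growth $2^{L_0}$ of trajectory counts against the super-exponential decay of the Pierce-expansion length tail.
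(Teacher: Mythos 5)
Your opening reduction is correct: after Parseval and the substitution $rn = jN+m$, the left-hand side equals $4|\mc{E}_r(N)|$ (this is Lemma \ref{lem:reltoEd}), so the task is indeed to bound $|\mc{E}_r(N)|$. But the route you propose from there does not work, for two reasons. First, the count of Pierce trajectories is governed by the size of the digit alphabet, not by the length of the expansion: the digits relevant to the recursion for $\Lambda_r$ form an increasing sequence in $\{1,\dots,r-1\}$, so the potential trajectories are subsets of that set, of which there are $2^{r-1}$; restricting to length $L\leq L_0$ still leaves $\binom{r-1}{\leq L_0}\approx r^{L_0}$ of them, and for $r$ as large as $T = e^{q^{1/5}/40}$ this is astronomically larger than $2^{(1+o(1))q^2}$. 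Second, a trajectory is not ``bad'' merely when a step lands in $\mc{E}(N)$: the identity $\Lambda_r(n)=\prod_j \Lambda_{r_j}(\phi_r(\theta^{(j-1)}(n)))$ requires you to control whether each intermediate point lies in $\mc{E}_{r_j}(N)$ for the \emph{smaller} moduli $r_j$, which forces a genuine recursion $g(p)\leq 2p+\sum_{s<p}2^{s-1}g(s)$ whose solution is $g(p)\approx 2^{p^2}$, with exponent the modulus rather than $q^2$. This is exactly why the paper states that the Pierce-expansion recursion alone only reaches $r=O(\sqrt{\log N})$ and cannot access the range $r\approx T$. Relatedly, the $NT^{-1+o(1)}$ term does not come from a tail bound on long Pierce expansions.

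What is missing is the paper's second mechanism, which handles non-friable $r$ by an entirely different argument. One first proves $|\mc{E}_a(N)|\ll(\log a)^2 2^{q^2}|\mc{E}(N)|$ only for $q$-\emph{friable} $a$ (via the Pierce recursion plus the subadditivity of Lemma \ref{lem:composite}). Then Lemma \ref{lem:shiftEb} gives the reciprocity $|\mc{E}_r(N)\triangle\phi_a^{-1}(\mc{E}_r(N))|\leq 2|\mc{E}_a(N)|$, so $\mc{E}_r(N)$ is nearly invariant under multiplication by every $q$-friable $a\leq T$ modulo $N$. Averaging the additive character $e(kn/N)$ over $a\in S(T,q)$ and invoking the exponential-sum bounds of Harper and Fouvry--Tenenbaum over friable numbers shows all nonzero Fourier coefficients of $1_{\mc{E}_r(N)}$ are $\ll NT^{-1/20+o(1)}+(\log T)^2 2^{q^2}|\mc{E}(N)|$; Erd\H{o}s--Tur\'an then forces $\mc{E}_r(N)$ to be equidistributed in $(0,N)$, which contradicts $\mc{E}_r(N)\cap(0,N/r)=\emptyset$ unless $|\mc{E}_r(N)|$ is small. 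The $NT^{-1+o(1)}$ loss is precisely the price of these friable exponential-sum estimates. Without this invariance-plus-discrepancy step your argument cannot reach $r$ beyond $O(\sqrt{\log N})$.
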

See Section \ref{subsec:prop21} below for a discussion of the proof of Proposition \ref{prop:approxDil}.
\begin{rem}
The choice of $q$ required to make this bound $o(N)$ is of course restricted by the size of $|\mc{E}(N)|$. In principle, as long as $|\mc{E}(N)| = o(N)$ a suitable choice of $q = q(N) \ra \infty$ is available. However, for our application we will require the above estimate to hold with $T \geq (\log N)^{2+\e}$, which therefore forbids us from choosing $q$ to grow too slowly. Consequently, our method does not allow us to show that $|\mc{L}_{\lambda}(N)| \leq (1-c)N$ for some fixed $c > 0$.
\end{rem}
In contrast to the above proposition, we will show, assuming GRH, that the dilation symmetry property fails for at least some primes $p$ bigger than $(\log N)^{2+\e}$.
\begin{prop}\label{prop:GRHavg}
Assume GRH. Then for any $\e > 0$ and any $P \geq (\log N)^2(\log\log N)^{2+\e}$ there exists a prime $p \in (P,2P]$ such that 
$$
\frac{1}{N}\sum_{a\pmod{N}} \left|S_{\lambda}(pa/N) - \lambda(p)S_{\lambda}(a/N)\right|^2 \gg N.
$$
\end{prop}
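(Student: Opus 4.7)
The strategy is to show the \emph{average} of the relevant quantity over primes $p \in (P, 2P]$ is $\gg N$, from which the existence of a single prime satisfying the bound follows by pigeonhole.

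Since $N$ is prime, the Fourier coefficient $S_\lambda(a/N)$ admits a clean expansion in Dirichlet characters mod $N$. Using the Gauss-sum inversion $e(an/N) = \phi(N)^{-1}\sum_{\chi \bmod N}\chi(an)\tau(\bar\chi)$ valid for $(an,N)=1$, where $\tau(\chi) := \sum_{b=1}^{N-1}\chi(b)e(b/N)$, I would rewrite
\[
S_\lambda(a/N) = \frac{1}{\phi(N)}\sum_{\chi \bmod N}\chi(a)\tau(\bar\chi)M(\chi), \qquad M(\chi) := \sum_{n=1}^{N-1}\lambda(n)\chi(n).
\]
Then $S_\lambda(pa/N) - \lambda(p)S_\lambda(a/N)$ has each character's contribution multiplied by $\chi(p) - \lambda(p)$. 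Orthogonality of characters in $a \in (\mb{Z}/N\mb{Z})^{\times}$ (Parseval), together with $|\tau(\chi)|^2 = N$ for $\chi \neq \chi_0$ and $|\tau(\chi_0)|^2 = 1$, gives
\[
\frac{1}{N}\sum_{a \bmod N}\left|S_\lambda(pa/N) - \lambda(p)S_\lambda(a/N)\right|^2 = \frac{1}{\phi(N)}\sum_{\chi \neq \chi_0}|\chi(p) - \lambda(p)|^2|M(\chi)|^2 + O(\log^4 N),
\]
where the error absorbs the contributions of $\chi_0$ and $a = 0$, both $\ll \log^4 N$ using the GRH bound $|M(\chi_0)| = |\sum_{n<N}\lambda(n)| \ll N^{1/2}\log^2 N$.

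Next I average this identity over primes $p \in (P, 2P]$. For primes $p \neq N$, $\lambda(p) = -1$ and $|\chi(p)| = 1$, so $|\chi(p) - \lambda(p)|^2 = 2 + 2\,\text{Re}(\chi(p))$; hence
\[
\frac{1}{\Pi(P)}\sum_{P < p \leq 2P}|\chi(p) - \lambda(p)|^2 = 2 + \frac{2}{\Pi(P)}\,\text{Re}\sum_{P < p \leq 2P}\chi(p),
\]
with $\Pi(P) = \pi(2P) - \pi(P) \sim P/\log P$. Invoking GRH for $L(s, \chi)$ in the form $|\sum_{p \leq x}\chi(p)| \ll x^{1/2}\log(Nx)$ for nonprincipal $\chi$ mod $N$, the above error is $\ll \log(NP)\log P / P^{1/2}$, which under $P \geq (\log N)^2(\log\log N)^{2+\e}$ becomes $\ll (\log\log N)^{-\e/2} = o(1)$ uniformly in $\chi$. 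Combined with the Parseval identity $\sum_\chi |M(\chi)|^2 = \phi(N)(N-1) = (N-1)^2$ and $|M(\chi_0)|^2 = O(N\log^4 N)$, the average of the proposition's quantity over primes becomes
\[
\frac{2 + o(1)}{\phi(N)}\sum_{\chi \neq \chi_0}|M(\chi)|^2 + O(\log^4 N) = (2 + o(1))N,
\]
so at least one prime $p \in (P, 2P]$ achieves $\gg N$.

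The crux of the argument is the GRH bound $\sum_{p \leq x}\chi(p) \ll x^{1/2}\log(Nx)$, which is one logarithmic factor sharper than the naive consequence $\ll x^{1/2}\log^2(qx)/\log x$ of the standard GRH estimate on $\psi(x, \chi)$. The cushion of $(\log\log N)^{2+\e}$ over the baseline $(\log N)^2$ in the hypothesis on $P$ is precisely what makes this error term beat the main term of $2$ uniformly over all non-principal characters mod $N$; everything else in the plan is essentially orthogonality and Parseval.
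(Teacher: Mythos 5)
Your proof is correct and takes essentially the same route as the paper's: both reduce the average over $a$ to a sum over Dirichlet characters modulo $N$ (you via Gauss sums, the paper via first rewriting the quantity as $4|\mc{E}_p(N)|$ and detecting the congruence $pn \equiv m \pmod{N}$ by orthogonality), then average over primes $p \sim P$, bound $\sum_{p \sim P}\chi(p)$ for non-principal $\chi$ using GRH, apply Parseval to $\sum_{\chi}|M(\chi)|^2$, and conclude by pigeonhole. The one inaccuracy is the claim that GRH gives $\sum_{n<N}\lambda(n) \ll N^{1/2}\log^2 N$ — this is not a known consequence of RH (the best conditional bounds are of the shape $N^{1/2}\exp\bigl(C(\log N)^{1/2}(\log\log N)^{O(1)}\bigr)$) — but since this sum only needs to be $o(N)$ in both places you use it, the unconditional prime number theorem bound $Ne^{-c\sqrt{\log N}}$, which is what the paper invokes, suffices and nothing else in your argument changes.
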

The proof of this proposition involves a straightforward application of orthogonality of characters modulo $N$ together with a standard consequence of GRH to bounds on character sums over primes. It will be given in Section \ref{sec:GRH}.
\subsection{Towards a proof of Proposition \ref{prop:approxDil}} \label{subsec:prop21}
Let us briefly explain the proof strategy for Proposition \ref{prop:approxDil}, which is at the heart of this paper. To do this it will be helpful to introduce some notation which we will use throughout. In the sequel, given integers $d \geq 1$ and $1 \leq n < N$ we write 
$$
\phi_d(n) := N \{dn/N\}, 
\quad \Lambda_d(n) := \lambda(dn)\lambda(\phi_d(n)) \in \{-1,+1\}.
$$ 
Also, write
$$
\mc{E}_d(N) := \{n < N : \Lambda_d(n) = -1\}.
$$
By construction, $\phi_d$ maps $\mb{Z} \cap (0,N)$ to itself in such a way that $\phi_d(n) \equiv dn \pmod{N}$. We note moreover that if $n < N/d$ then $\phi_d(n) = dn$, and thus $\Lambda_d(n) = +1$. In particular, 
\begin{equation}\label{eq:Edinit}
\mc{E}_d(N) \cap (0,N/d) = \emptyset \text{ for all } d \geq 1,
\end{equation}
a fact we will use often in the sequel. \\
There are two key ideas that enter the picture. 
\subsubsection{A recursive bound for $|\mc{E}_d(N)|$}
The first idea is inspired by the work in \cite[Sec. 3]{Man}. There, we deduced the dilation property \eqref{eq:GaussSumLike} under the assumption that $\lambda(n) = \eps \lambda(N-n)$ for \emph{all} $1 \leq n <N$, for a fixed choice $\eps \in \{-1,+1\}$. The argument was inductive in nature, effectively showing that if for every $1 \leq r < d$ we have
$$
S_{\lambda}(ra/N) = \lambda(r) S_{\lambda}(a/N) \text{ for all } a \pmod{N}
$$ 
then the same is true for $r = d$. By reappraising the argument in \cite{Man}, one can show (see Section \ref{subsec:Pierce}) that when $d = p$ is prime, if $\lambda(n) = \eps\lambda(N-n)$ for \emph{all} $n<N$ then
$$
\Lambda_p(n) = \Lambda_r(\phi_p(n)) \Lambda_p(n\{N/n\}),
$$
where $r := \left \lfloor N/n\right\rfloor$. If $r \geq p$ then $\Lambda_p(n) = +1$ because of \eqref{eq:Edinit}; otherwise $1 \leq r < p$ and we can pass from computing $\Lambda_p(n)$ to computing a value of $\Lambda_r$ and a value of $\Lambda_p$ with the smaller argument $n\{N/n\} < n$. Moreover, this identity may be iterated to express $\Lambda_p(n\{N/n\})$ similarly. \\
In our case, however, we only assume that there is $\eta \in \{-1,+1\}$ such that $\mc{E}(N) = \{n < N : \lambda(n) \neq -\eta\lambda(N-n)\}$ is \emph{sparse}, rather than empty. By incorporating the possibility of exceptions to this process, we show upon iteration that 
$$
\Lambda_p(n) = \prod_{1 \leq j \leq k_p(n)} \Lambda_{r_j(n)}(n_{j-1})
$$
for all but $\leq 2p|\mc{E}(N)|$ exceptions, where $1 \leq k_p(n) < p$, $1 \leq r_1(n) < r_2(n) < \cdots < r_{k_p(n)}(n) < p$, and the $n_j$ are all positive integers determined explicitly by $n$ (see Lemma \ref{lem:recurrence}). The integers $r_j(n)$ and $n_j$ are linked to what is called the \emph{Pierce expansion} of $n/N$, see Section \ref{subsec:Pierce} for further details. \\
Using this process, we obtain a recursive inequality relating $|\mc{E}_d(N)|$ for prime $d$, to $|\mc{E}_r(N)|$ with $1 \leq r < d$ (the case of composite $d$ is simpler and may ultimately be reduced to the prime case anyway; see Lemma \ref{lem:composite}). This leads to the bound $|\mc{E}_d(N)| \leq 2^{d^2}|\mc{E}(N)|$ for all $d \geq 2$, as well as the alternative bound
\begin{equation}\label{eq:friableEd}
|\mc{E}_d(N)| \leq (\log d)^2 2^{q^2} |\mc{E}(N)| \text{ for all } d \geq 2 \text{ such that } P^+(d) \leq q
\end{equation}
that improves upon the first when $P^+(d)$ is somewhat smaller than $d$ (see Proposition \ref{prop:recur}). The significant advantage of these estimates is that they do not involve any additional error terms of the shape $o(N)$. As a consequence, we are able to gain stronger conclusions assuming stronger hypotheses on the sparseness of $\mc{E}(N)$. \\
A better understanding of Pierce expansions would allow improvements to \eqref{eq:friableEd} in the $q$-aspect; at present we are forced to take $q = O(\sqrt{\log N})$. In particular, this argument alone is insufficient for us to access primes in the the GRH range $p \geq (\log N)^{2+\e}$, and thus to obtain a contradiction to Lemma \ref{prop:GRHavg}. See Remark \ref{rem:nurprob} below for a discussion of this bound, what we expect might be true, and how the heuristic truth might allow us to obtain an unconditional result.
\subsubsection{Relations for $\mc{E}_d(N)$ using multiplicativity}
To obtain bounds for $|\mc{E}_d(N)|$ when $d$ is non-friable we exploit the multiplicative nature of the definition of $\mc{E}_d(N)$. For any $a,b \geq 2$ coprime to $N$ we show, by relating them to $\mc{E}_{ab}(N)$, that\footnote{Given sets $S,T$ we denote by $S \triangle T$ the symmetric difference $S \triangle T = (S \cup T) \bk (S \cap T) = (S\bk T) \cup (T \bk S)$.}
$$
\mc{E}_a(N) \triangle \phi_a^{-1} (\mc{E}_b(N)) = \mc{E}_b(N) \triangle \phi_b^{-1} (\mc{E}_a(N)),
$$
see Lemma \ref{lem:shiftEb}. This establishes a sort of ``reciprocity'' between exceptional sets $\mc{E}_b(N)$ and $\mc{E}_a(N)$.  By elementary manipulations, we deduce  that
$$
|\mc{E}_b(N) \triangle \phi_a^{-1} (\mc{E}_b(N))| \leq 2|\mc{E}_a(N)|.
$$
When applied with $q$-friable values of $a$ (so that by \eqref{eq:friableEd}, $|\mc{E}_a(N)|$ is small), we obtain that for any $1 \leq b < \sqrt{N}$ the set $\mc{E}_b(N)$ is left essentially invariant under multiplication by $a \pmod{N}$.  \\
This observation lends itself to controlling the Fourier coefficients $\pmod{N}$ of the set $\mc{E}_b(N)$ (or more precisely of its indicator function), which we do in Section \ref{sec:discrep}. Indeed, we show that if $T \leq e^{\tfrac{1}{40}q^{1/5}}$ and $1 \leq k < N$ then
$$
\sum_{n \in \mc{E}_b(N)} e(kn/N) \approx \sum_{n \in \mc{E}_b(N)} \left(\frac{1}{\Psi(T,q)} \sum_{\ss{a \leq T \\ P^+(a) \leq q}} e(kan/N)\right),
$$
where $\Psi(T,q) := |\{n \leq T : P^+(a) \leq q\}|$. Using bounds for exponential sums over friable numbers due to Harper \cite{Har} and to Fouvry-Tenenbaum \cite{FoTe}, we obtain upper bounds for these Fourier coefficients, uniformly over $1 \leq k < N$. \\ Consequently, if $|\mc{E}_b(N)|$ is \emph{not} small then it must be \emph{uniformly distributed} in $N$, in the sense for any interval $I \subset (0,1)$, writing its length as $|I|$,
$$
|\{n \in \mc{E}_b(N): \, n/N \in I \}| \sim |\mc{E}_b(N)| \cdot |I|.
$$ 
As $\mc{E}_b(N) \cap (0,N/b) = \emptyset$ by \eqref{eq:Edinit}, this\footnote{This conclusion can only be obtained in a restricted range of $b$, as otherwise the quantity $|\mc{E}_b(N)||I| = \tfrac{|\mc{E}_b(N)|}{b}$ is smaller than the error term in our estimates.} rules out the latter possibility, and we hence establish that $|\mc{E}_b(N)|$ is small. Arguing precisely along the above lines allows us to establish Proposition \ref{prop:approxDil}.  
\section{Proof of Proposition \ref{prop:GRHavg}} \label{sec:GRH}
Our first task is to prove Proposition \ref{prop:GRHavg}. 
The following easy lemma relates $\mc{E}_d(N)$ to the Fourier coefficients $S_{\lambda}(a/N)$.
\begin{lem} \label{lem:reltoEd}
Let $N,d \geq 2$ with $(d,N) = 1$. Then
$$
\frac{1}{N}\sum_{a \pmod{N}} \left|S_{\lambda}(da/N)- \lambda(d) S_{\lambda}(a/N)\right|^2 = 4|\mc{E}_d(N)|.
$$
\end{lem}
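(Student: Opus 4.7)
The plan is to apply Parseval's identity on $\mb{Z}/N\mb{Z}$ after reparametrizing the first sum. First I would note that because $dn \equiv \phi_d(n) \pmod{N}$, we have $e(dna/N) = e(\phi_d(n)a/N)$, so
$$S_{\lambda}(da/N) = \sum_{1 \leq n < N} \lambda(n)\, e(\phi_d(n) a/N).$$
Since $(d,N) = 1$, the map $n \mapsto \phi_d(n)$ is a permutation of $\{1,\ldots,N-1\}$; calling its inverse $\sigma$ and substituting $m = \phi_d(n)$ re-expresses the above as the discrete Fourier transform on $\mb{Z}/N\mb{Z}$ of the function $g$ defined by $g(m) = \lambda(\sigma(m))$ for $1 \leq m < N$ and $g(0) = 0$. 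Similarly $\lambda(d) S_{\lambda}(a/N)$ is the Fourier transform of $h(m) = \lambda(d)\lambda(m)$ (with $h(0) = 0$).

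Next I would apply Plancherel to the difference $g - h$ to obtain
$$\frac{1}{N}\sum_{a \pmod{N}} \left|S_{\lambda}(da/N) - \lambda(d) S_{\lambda}(a/N)\right|^2 = \sum_{1 \leq m < N} \left|\lambda(\sigma(m)) - \lambda(d)\lambda(m)\right|^2.$$
Each summand on the right is the square of a difference of two $\pm 1$ quantities, hence is either $0$ or $4$, taking the value $4$ precisely when $\lambda(\sigma(m)) \neq \lambda(d)\lambda(m)$.

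Finally I would translate the non-vanishing condition back via $n = \sigma(m)$, so that $m = \phi_d(n)$. Using total multiplicativity $\lambda(dn) = \lambda(d)\lambda(n)$, the equality $\lambda(\sigma(m)) = \lambda(d)\lambda(m)$ becomes $\lambda(n) = \lambda(d)\lambda(\phi_d(n))$, which on multiplying through by $\lambda(d)$ is equivalent to $\lambda(dn)\lambda(\phi_d(n)) = 1$, i.e.\ $\Lambda_d(n) = +1$, i.e.\ $n \notin \mc{E}_d(N)$. As $\sigma$ is a bijection of $\{1,\ldots,N-1\}$, the number of $m$ at which the summand equals $4$ is exactly $|\mc{E}_d(N)|$, and the lemma follows. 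I do not anticipate any real obstacle; the only point requiring mild care is checking that multiplication by $d \pmod{N}$ permutes $\{1,\ldots,N-1\}$, which is immediate from the coprimality hypothesis $(d,N)=1$.
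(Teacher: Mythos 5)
Your proposal is correct and is essentially the paper's argument: both rest on Plancherel modulo $N$ together with the observation that $dn \equiv m \pmod{N}$ forces $m = \phi_d(n)$, so that the discrepancy reduces to counting $n$ with $\Lambda_d(n) = -1$. The only (cosmetic) difference is that you apply Plancherel directly to the difference function $g-h$ and count the summands equal to $4$, whereas the paper expands the square first and evaluates the cross term by orthogonality; the two computations are algebraically identical.
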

\begin{proof}
Expanding the square and employing Plancherel's theorem $\pmod{N}$ (noting that $a \mapsto ad \pmod{N}$ is a bijection), the left-hand side is
\begin{align*}
&\frac{1}{N} \sum_{a \pmod{N}} |S_{\lambda}(da/N)|^2 +\frac{1}{N} \sum_{a \pmod{N}} |S_{\lambda}(a/N)|^2 - \frac{2\lambda(d)}{N}\text{Re}\left(\sum_{a \pmod{N}} S_{\lambda}(da/N) \bar{S}_{\lambda}(a/N)\right) \\
&= 2\left(N-1 - \frac{\lambda(d)}{N}\text{Re}\left(\sum_{a \pmod{N}} S_{\lambda}(da/N) \bar{S}_{\lambda}(a/N)\right)\right).
\end{align*}
We also have
\begin{equation} \label{eq:periodEqn}
\frac{\lambda(d)}{N}\sum_{a \pmod{N}} S_{\lambda}(da/N) \bar{S}_{\lambda}(a/N) = \sum_{n,m < N} \lambda(dn)\lambda(m) \frac{1}{N} \sum_{a \pmod{N}} e(a(dn-m)/N) = \sum_{\ss{n,m < N \\ dn \equiv m \pmod{N}}} \lambda(dn)\lambda(m).
\end{equation}
Observe that $nd \equiv m \pmod{N}$ with $1 \leq n,m < N$ if, and only if, $m = N\{nd/N\} = \phi_d(n)$, whence
$$
\lambda(nd)\lambda(m) = \lambda(nd)\lambda(\phi_d(n)) = \Lambda_d(n).
$$
Thus, we have
\begin{align*}
&\frac{1}{N}\sum_{a \pmod{N}} \left|S_{\lambda}(da/N)- \lambda(d) S_{\lambda}(a/N)\right|^2 = 2(N-1 - \sum_{n < N} \Lambda_d(n)) \\
&= 2(N-1 - (N-1 - 2|\{n < N : \Lambda_d(n) = -1\}|)) = 4|\mc{E}_d(N)|,
\end{align*}
as claimed.
\end{proof}
As a consequence of Lemma \ref{lem:reltoEd}, it is clear that Proposition \ref{prop:approxDil} yields an upper bound for $|\mc{E}_d(N)|$ for $d$ in a wide range. Our contradiction will arise from a lower bound for $|\mc{E}_p(N)|$, on average over primes $p \in (P,2P]$ in a range of $P = T$ that is accessible in Proposition \ref{prop:approxDil}. To this end, we have the following standard result. 
\begin{lem}\label{lem:withZFR}
Let $3 \leq T \leq N$. Let $\chi$ be a non-principal character modulo $N$ whose Dirichlet $L$-function $L(s,\chi)$ is zero-free in the region 
$$
\{s \in \mb{C} : \alpha < \text{Re}(s) \leq 1, \, |\text{Im}(s)| \leq T\}.
$$
Then for any\footnote{As is standard, the summation condition $p \sim P$ means $P < p \leq 2P$.} $3 \leq P < N/2$,
$$
\frac{1}{\pi(2P)-\pi(P)} \left|\sum_{p \sim P} \chi(p)\right| \ll \frac{1}{\log P}+ \frac{(\log N)^2}{T} + \frac{(\log N)(\log T)}{P^{1-\alpha}}.
$$
\end{lem}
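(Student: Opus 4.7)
The plan is to reduce the character sum over primes to a von Mangoldt-weighted sum via partial summation, and then control $\psi(x,\chi) := \sum_{n \leq x} \Lambda(n)\chi(n)$ using the truncated explicit formula, exploiting the hypothesized zero-free region for $L(s,\chi)$.

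First, I would invoke the standard truncated explicit formula for $\chi$ non-principal modulo $N$ (as found, for example, in Davenport's \emph{Multiplicative Number Theory} or Montgomery--Vaughan),
$$
\psi(x,\chi) = -\sum_{|\text{Im}(\rho)| \leq T} \frac{x^\rho}{\rho} + O\!\left(\frac{x(\log xN)^2}{T}\right),
$$
valid for $2 \leq T \leq x \leq N$ (with a minor modification when $T > x$). The zero-free region hypothesis forces every non-trivial zero $\rho$ with $|\text{Im}(\rho)| \leq T$ to satisfy $\text{Re}(\rho) \leq \alpha$, so $|x^\rho| \leq x^\alpha$. Combined with the standard vertical density estimate $\#\{\rho : |\text{Im}(\rho) - t| \leq 1\} \ll \log(N(|t|+2))$, and a separate treatment of the $O(\log N)$ low-lying zeros with $|\text{Im}(\rho)| \leq 1$, one obtains $\sum_{|\text{Im}(\rho)| \leq T} 1/|\rho| \ll (\log N)(\log T)$ (using $T \leq N$ to absorb a $(\log T)^2$ term). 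Hence, uniformly for $x \in [P, 2P]$,
$$
|\psi(x,\chi)| \ll x^\alpha (\log N)(\log T) + \frac{x (\log N)^2}{T}.
$$

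Second, I would pass from $\psi(x,\chi)$ to $\theta(x,\chi) := \sum_{p \leq x} \chi(p) \log p$ at the cost of the usual prime-power error $\ll \sqrt{x}\log x$, and apply partial summation with weight $1/\log t$ on $[P,2P]$ to obtain
$$
\left|\sum_{p \sim P} \chi(p)\right| \ll \frac{1}{\log P} \max_{P \leq t \leq 2P} |\theta(t,\chi) - \theta(P,\chi)|.
$$
Plugging in the preceding bound on $\psi$ together with the $\sqrt{P}\log P$ prime-power term, and normalising by $\pi(2P) - \pi(P) \gg P/\log P$, cancels the $1/\log P$ prefactor and converts $P^\alpha/P$ into $P^{-(1-\alpha)}$, producing the three terms in the stated bound. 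The residual $\log P/\sqrt{P}$ error is comfortably $\ll 1/\log P$ for $P \geq 3$ and so is absorbed into the first term on the right-hand side.

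The only mild obstacle is the bookkeeping around zeros of small modulus when estimating $\sum_{\rho} 1/|\rho|$: one must isolate the $O(\log N)$ zeros with $|\text{Im}(\rho)| \leq 1$ and verify that they contribute acceptably, using standard unconditional facts on $L(s,\chi)$ (or, alternatively, work with the more symmetric form of the explicit formula involving $(x^\rho - 1)/\rho$, which handles small $|\rho|$ gracefully). Everything else--the derivation of the truncated explicit formula by shifting the contour of $-(L'/L)(s,\chi)\,x^s/s$ from $\text{Re}(s) = 1 + 1/\log x$ down to $\text{Re}(s) = \alpha$ (the ZFR guaranteeing that no residues are crossed), and the Abel summation--is entirely routine.
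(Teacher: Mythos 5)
Your proposal is correct and follows essentially the same route as the paper: the truncated explicit formula for $\psi(x,\chi)$, the zero-free region to force $|x^{\rho}| \leq x^{\alpha}$, the unit-interval zero-density bound to get $\ll (\log N)(\log T)$ from the sum over zeros, and a routine passage between $\pi$-, $\theta$- and $\psi$-type sums. The paper indeed uses the differenced form $\sum_{\rho} P^{\rho-1}(2^{\rho}-1)/\rho$ to dispose of the low-lying zeros, exactly the alternative you flag at the end.
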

\begin{proof}
The argument is similar to that of \cite[Prop. 2.9]{Pil}. First, by the prime number theorem we have
\begin{align}
\frac{1}{\pi(2P)-\pi(P)} \left|\sum_{p \sim P} \chi(p)\right| &\ll \frac{\log P}{P} \left|\sum_{p \sim P} \chi(p)\right| = \frac{1}{P} \left|\sum_{p \sim P} \chi(p) \log p\right| + O\left(\frac{1}{\log P}\right) \nonumber\\
&= \frac{1}{P} \left|\sum_{n \sim P} \chi(n)\Lambda(n)\right| + O\left(\frac{1}{\log P}\right). \label{eq:passToLambda}
\end{align}
Using the Hadamard product of $L(s,\chi)$ and the Perron formula, one can show (see e.g. \cite[Thm. 11.3]{Kou})
$$
\frac{1}{P}\sum_{n \sim P} \chi(n)\Lambda(n) = -\sum_{\ss{\rho = \beta + i\gamma \\ L(\rho,\chi) = 0 \\ |\gamma| \leq T}} \frac{P^{\rho-1}(2^{\rho}-1)}{\rho} + O\left(\frac{(\log N)^2}{T}\right).
$$
As for any unit subinterval $I \subset [-T,T]$ there are $O(\log N)$ zeros $\rho$ with $\gamma \in I$ \cite[Thm. 10.17]{MonVau}, we have
\begin{align*}
\frac{1}{P}\sum_{n \sim P} \chi(n)\Lambda(n) &\ll\sum_{0 \leq k \leq T} \sum_{\ss{\rho = \beta + i\gamma \\ L(\rho,\chi) = 0 \\ |\gamma-k| \leq 1/2}} \left|\frac{P^{\rho-1}(2^{\rho}-1)}{\rho}\right| + O\left(\frac{(\log N)^2}{T}\right) \ll (\log N) P^{\alpha-1} \sum_{0 \leq k \leq T} \frac{1}{1+k} + \frac{(\log N)^2}{T} \\
&\ll \frac{(\log N) (\log T)}{P^{1-\alpha}} + \frac{(\log N)^2}{T}.
\end{align*}
Combining this estimate with \eqref{eq:passToLambda} yields the claim.
\end{proof}
Assuming some information on a zero-free region for the product $\prod_{\ss{\chi \pmod{N} \\ \chi \neq \chi_0}} L(s,\chi)$
furnishes the following.
\begin{lem} \label{lem:GRHEp}
Assume that the product $L$-function $\prod_{\ss{\chi \pmod{N} \\ \chi \neq \chi_0}} L(s,\chi)$ is zero-free in the region
$$
\{s \in \mb{C} : \alpha < \text{Re}(s) \leq 1, \, |\text{Im}(s)| \leq (\log N)^3\}.
$$
Then for $3 \leq P < N/2$,
$$
\frac{1}{\pi(2P)-\pi(P)} \sum_{p \sim P} |\mc{E}_p(N)| = \left(\frac{1}{2} + O\left(\frac{(\log N)(\log \log N)}{P^{1-\alpha}} + \frac{1}{\log P}\right)\right)N.
$$
\end{lem}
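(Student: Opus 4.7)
The plan is as follows. First, from the computation in the proof of Lemma \ref{lem:reltoEd} (specifically, \eqref{eq:periodEqn} together with the identity $\sum_{n<N}\Lambda_p(n) = N-1 - 2|\mc{E}_p(N)|$), we have
$$
|\mc{E}_p(N)| = \frac{N-1}{2} - \frac{1}{2}\sum_{n<N}\Lambda_p(n).
$$
Averaging over $p \sim P$ therefore reduces the lemma to showing that $\frac{1}{\pi(2P)-\pi(P)}\sum_{p\sim P}\sum_{n<N}\Lambda_p(n)$ is $O(N/\log P + N(\log N)(\log\log N)/P^{1-\alpha})$.

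Second, I would detect the congruence $\phi_p(n) \equiv pn \pmod{N}$ via orthogonality of Dirichlet characters modulo $N$. Since $\Lambda_p(n) = \lambda(pn)\lambda(\phi_p(n)) = -\lambda(n)\lambda(\phi_p(n))$ (by complete multiplicativity and $\lambda(p) = -1$), the inner sum rearranges as
$$
\sum_{n<N}\Lambda_p(n) = -\frac{1}{\phi(N)}\sum_{\chi \bmod N}\chi(p)\, A(\chi) A(\bar\chi), \qquad A(\chi) := \sum_{\ss{n<N \\ (n,N)=1}} \lambda(n)\chi(n),
$$
plus a contribution of size $O(N - \phi(N))$ from terms with $(nm,N)>1$ (which is vacuous in the application to prime $N$).

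Third, swapping with the average over $p \sim P$ reduces matters to estimating $\sum_{p \sim P}\chi(p)$. For non-principal $\chi$, Lemma \ref{lem:withZFR} applied with $T = (\log N)^3$ gives, after absorbing $(\log N)^2/T = 1/\log N \leq 1/\log P$, the uniform bound
$$
\frac{1}{\pi(2P)-\pi(P)}\left|\sum_{p \sim P}\chi(p)\right| \ll \frac{1}{\log P} + \frac{(\log N)(\log\log N)}{P^{1-\alpha}}, \qquad \chi \neq \chi_0.
$$
Pulling this out of the sum over non-principal $\chi$ and invoking Parseval $\sum_\chi |A(\chi)|^2 = \phi(N)^2$ (since $|\lambda(n)|=1$) together with Cauchy--Schwarz $\sum_\chi |A(\chi)A(\bar\chi)| \leq \phi(N)^2$ yields the required error bound. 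The principal character contributes $A(\chi_0)^2/\phi(N) \cdot (\pi(2P)-\pi(P)+O(1))$; here the classical (unconditional) prime number theorem gives $|A(\chi_0)| \ll N e^{-c\sqrt{\log N}}$, so after dividing by $\pi(2P)-\pi(P)$ this contribution is $O(N e^{-2c\sqrt{\log N}}) = o(N/\log P)$ and is absorbed.

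The main---and essentially only---obstacle is the principal character term: the hypothesized zero-free region is imposed only on $\prod_{\chi \neq \chi_0} L(s,\chi)$, so the bound on $A(\chi_0) = \sum_{n<N,(n,N)=1}\lambda(n)$ must be supplied externally rather than through Lemma \ref{lem:withZFR}. Fortunately the super-polynomial saving from PNT for $\zeta(s)$ comfortably beats any negative power of $\log P$ for $P < N/2$, so no extra hypothesis is needed.
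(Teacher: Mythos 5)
Your argument is correct and follows essentially the same route as the paper: expressing $|\mc{E}_p(N)|$ via \eqref{eq:periodEqn}, detecting the congruence by orthogonality of characters modulo $N$, applying Lemma \ref{lem:withZFR} with $T=(\log N)^3$ uniformly over non-principal $\chi$, bounding the resulting character-sum square average by Parseval, and handling the principal character separately via the unconditional prime number theorem. The only (minor) difference is that you flag explicitly the restriction $(nm,N)=1$ and the fact that the principal character needs an external input, both of which the paper handles implicitly by working with $N$ prime.
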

\begin{proof}
From the proof of Lemma \ref{lem:reltoEd} and in particular \eqref{eq:periodEqn}, we have
$$
|\mc{E}_p(N)| = \frac{1}{2}\left(N-1 - \sum_{\ss{m,n < N \\ np \equiv m \pmod{N}}} \lambda(pn)\lambda(m)\right).
$$
For convenience, write $\tilde{\pi}(P) := \pi(2P)-\pi(P)$. Averaging over the primes $p \sim P$ and using orthogonality of characters modulo $N$, we get
\begin{align*}
\frac{1}{\tilde{\pi}(P)} \sum_{p \sim P} |\mc{E}_p(N)| &= \frac{1}{2}(N-1) - \frac{1}{\tilde{\pi}(P)} \sum_{p \sim P} \sum_{m,n < N} \lambda(pn)\lambda(m) 1_{pn \equiv m\pmod{N}} \\ 
&= \frac{1}{2}(N-1) - \frac{1}{N-1} \sum_{\chi \pmod{N}} \left(\frac{1}{\tilde{\pi}(P)} \sum_{p \sim P} \chi(p)\lambda(p)\right) \left|\sum_{n < N} \lambda(n) \chi(n)\right|^2.
\end{align*}
By the prime number theorem, the contribution from the principal character is 
$$
\frac{1}{N-1}\left|\sum_{n < N} \lambda(n)\right|^2 \ll Ne^{-2\sqrt{\log N}}.
$$
Furthermore, by Lemma \ref{lem:withZFR} we have, for every $\chi \neq \chi_0$,
\begin{align} \label{eq:GRHprime}
\frac{1}{\tilde{\pi}(P)} \sum_{p \sim P} \chi(p) \ll \frac{1}{\log P} + \frac{(\log N) (\log \log N)}{P^{1-\alpha}},
\end{align}
and by orthogonality of characters we obtain
$$
\frac{1}{N-1} \sum_{\chi \neq \chi_0} \left|\sum_{n < N} \lambda(n)\chi(n)\right|^2 \leq N-1.
$$
As $P < N$, the claim follows on combining these bounds. 
\end{proof}
\begin{proof}[Proof of Proposition \ref{prop:GRHavg}]
Assuming GRH, we may apply the previous lemma with $\alpha = 1/2$ to get that if $P \geq (\log N)^2(\log\log N)^{2+\e}$ then
$$
\max_{P < p \leq 2P} |\mc{E}_p(N)| \geq \frac{1}{\tilde{\pi}(P)} \sum_{p \sim P} |\mc{E}_p(N)| = \left(\frac{1}{2} + o_{\e}(1)\right) N.
$$
Combining this with Lemma \ref{lem:reltoEd}, the claim follows immediately.
\end{proof}

\begin{rem}
In the statement of Lemma \ref{lem:GRHEp} we assumed a uniformly large zero-free region for \emph{all} of the non-principal Dirichlet characters modulo $N$, in order to obtain a uniform bound for all prime-supported character sums.
It is natural to ask whether one could avoid any such assumption using zero-density estimates, and isolating the contribution of the small set $\mc{S}$ of characters that lack the appropriate zero-free region. Given current knowledge about Dirichlet $L$-functions, it cannot be ruled out that there are characters $\pmod{N}$ whose prime sums over an interval $[P,2P]$ with $P = N^{o(1)}$ yields no cancellation. Therefore, in order to obtain a suitable lower bound we must obtain an estimate like
$$
\frac{1}{N}\sum_{\chi \in \mc{S}} \left|\sum_{n < N} \lambda(n)\chi(n)\right|^2 = o(N).
$$
By H\"{o}lder's inequality, it would suffice to get a bound of the shape 
$$
\frac{1}{N}\sum_{\ss{\chi  \pmod{N} \\ \chi \neq \chi_0}} \left|\sum_{n < N} \lambda(n) \chi(n)\right|^p \ll N^{p/2}(\log N)^{O(1)}
$$
for \emph{some} $p >2$, but such a bound is not\footnote{See \cite{Har2} for corresponding results with $p \leq 2$; in contrast, when $p > 2$, the contribution from the principal character outweighs the rest of the sum over $\chi$ when it is included, making appeals to orthogonality less direct.} obvious.\\
It should be noted that if $N$ is an exceptional modulus with exceptional character $\tilde{\chi}$ then $\tilde{\chi}(n) = \lambda(n)$ for many $n < N$, whence it contributes $\gg N^{p-1} \gg N^{p/2}$ to the $p$th moment when $p > 2$. While, as previously mentioned our main theorems are already true for exceptional moduli, it it not clear to the author that characters whose $L$-functions have even a standard zero-free region cannot also conspire to give bounds of this size (and only a small number of these characters are tolerable).
\end{rem}

\section{A recursive bound for $|\mc{E}_d(N)|$} \label{sec:recur}
Let $N$ be a large integer (which we will eventually take to be prime). To prove Proposition \ref{prop:approxDil}, we must bound $|\mc{E}_d(N)|$, the number of exceptions $n < N$ to $\Lambda_d(n) = +1$. We think of these as \emph{exceptional} because of their relationship with the set $\mc{E}(N)$ of exceptions to $\lambda(n)\lambda(N-n) = -\eta$, which we will now explain. In preparation for this, let us define
$$
g(d) := |\mc{E}_d(N)|/|\mc{E}(N)|, \quad d \geq 1
$$
(for a reminder of the notation, see the beginning of Section \ref{subsec:prop21}). Note that this is well-defined for $N$ sufficiently large, since the main theorem in \cite{Man} implies that $\mc{E}(N) \neq \emptyset$ for any $N \geq 11$. \\
In this section we shall prove the following recursive bound on $g(d)$, which will be suitable for $q$-friable values of $d$.
\begin{prop} \label{prop:recur}
Let $N$ be a large prime. For every integer $2 \leq d < N$ we have $g(d) \leq 2^{d^2}$. Moreover,  if $P^+(d) \leq q$, then 
$$
g(d) \ll (\log d)^22^{q^2}.
$$
\end{prop}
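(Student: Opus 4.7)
The plan is to reduce both bounds to a recursive inequality for $g(p)$ at prime $p$, obtained by iterating the Pierce-expansion identity sketched in Section \ref{subsec:prop21}, and then to reduce composite $d$ to the prime-power case via Lemma \ref{lem:composite}. The friable improvement will then follow because every prime factor of $d$ is bounded by $q$.

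First, for prime $p$, I would establish the identity
\[
\Lambda_p(n) = \Lambda_r(\phi_p(n)) \cdot \Lambda_p(n'), \qquad r := \llf N/n \rrf, \quad n' := n\{N/n\},
\]
valid for all $n$ outside an exceptional set of size $O(|\mc{E}(N)|)$, by carefully reworking the argument from \cite[Sec. 3]{Man}: wherever \cite{Man} used the global identity $\lambda(m) = \eta \lambda(N-m)$, I would instead pay a factor of $-1$ exactly when $m \in \mc{E}(N)$, and absorb such $n$ into the exception set. Iteration then gives the product identity $\Lambda_p(n) = \prod_{j=1}^{k_p(n)} \Lambda_{r_j(n)}(n_{j-1})$ with $1 \leq r_1(n) < \cdots < r_{k_p(n)}(n) < p$, the strict monotonicity coming from the defining Pierce condition $r_j = \lfloor N_{j-1}/n_{j-1} \rfloor$ for appropriately chosen $N_{j-1}$. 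Since iteration terminates in at most $p-1$ steps (as the $r_j$ are distinct in $[1,p-1]$) and each step contributes at most $2 |\mc{E}(N)|$ new exceptions (one for each of the two possible sign-flip failures: $\phi_p(n) \in \mc{E}(N)$ or a related condition from the involution $m \leftrightarrow N-m$), the total exceptional set has size $\leq 2p|\mc{E}(N)|$.

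Next, for $n \in \mc{E}_p(N)$ outside the exceptional set, the product identity forces $n_{j-1} \in \mc{E}_{r_j(n)}(N)$ for at least one index $j$. For fixed $r < p$ and $m \in \mc{E}_r(N)$, the number of $n$ such that some $(r_j(n), n_{j-1}) = (r, m)$ is controlled by uniqueness of the Pierce expansion: the suffix from position $j$ onward is determined by $m$, while the prefix $(r_1,\ldots,r_{j-1})$ is a strictly increasing sequence in $[1, r-1]$, of which there are at most $2^{r-1}$. This yields the recursion
\[
g(p) \leq 2p + \sum_{r=1}^{p-1} 2^{r-1}\, g(r),
\]
which, by strong induction with the trivial base $g(1) = 0$, gives $g(p) \leq 2p + 2^{p-2} \cdot 2^{(p-1)^2} \leq 2^{p^2}$ since $(p-1)^2 + p - 2 = p^2 - p - 1 < p^2$.

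Finally, for composite $d$ I would invoke Lemma \ref{lem:composite} to reduce to the prime-power case. Assuming it has the expected multiplicative/additive shape $g(ab) \leq g(a) + g(b) + O(1)$ (or analogous, with a mild log-type overhead coming from iterating the bijection $\phi_{ab} = \phi_a \circ \phi_b \pmod N$), iteration over the $\omega(d) \ll \log d$ distinct prime-power factors of $d$, each bounded by $2^{p^{2v}} \leq 2^{d^2}$ in general and by $2^{q^2}$ under the $q$-friable assumption, produces the two claimed bounds, with the polylog factor $(\log d)^2$ absorbing the multiplicity from counting prime-power blocks and from the overhead at each reduction step. The main obstacle, in my view, is the first step: carefully verifying that the per-step sign-flip identity has exceptional set of size $\leq 2 |\mc{E}(N)|$ uniformly — i.e., that the exceptions from the involution $m \leftrightarrow N-m$ and from intermediate membership in $\mc{E}(N)$ do not compound beyond a factor of $p$ across all $p-1$ iterations — since this is what keeps the constant in front of $|\mc{E}(N)|$ linear in $p$ and therefore makes the bound $2^{p^2}$ (rather than something exponentially worse) tight enough to be useful when combined with the friable reduction.
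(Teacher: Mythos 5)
Your proposal follows essentially the same route as the paper: the Pierce-expansion recursion $\Lambda_p(n) = \prod_j \Lambda_{r_j}(\cdot)$ with $\leq 2p|\mc{E}(N)|$ exceptions, the $2^{r-1}$ preimage count yielding $g(p) \leq 2p + \sum_{r<p} 2^{r-1}g(r)$, the induction to $2^{p^2}$, and the reduction of composite $d$ via Lemma \ref{lem:composite} (whose actual form, $g(m_1\cdots m_k) \leq k\sum_j g(m_j)$, gives $g(d) \leq \Omega(d)^2\max_{p\mid d} g(p)$ and hence the $(\log d)^2 2^{q^2}$ bound exactly as you anticipate). The only caveat is organisational: the composite case must be handled \emph{inside} the strong induction, since the prime recursion calls $g(r)$ for composite $r<p$ as well.
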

In the next section, we will see how this result may be used to bound the sizes of $\mc{E}_d(N)$, for non-friable valued of $d$ as well.
\subsection{Properties of the maps $\phi_d$}
We begin with two simple but useful observations about the maps $\phi_d(n) := N\{nd/N\}$, which are self-maps on the interval $\mb{Z} \cap (0,N)$. (For the moment we allow any choice of large $N$; starting in Section \ref{subsec:Pierce} we will focus on $N$ prime.)
\begin{lem} \label{lem:phidInj}
For each $d \geq 1$ that is coprime to $N$, the map $\phi_d$ is bijective on $\mb{Z} \cap (0,N)$.
\end{lem}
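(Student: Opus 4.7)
The plan is to observe that $\phi_d$ is a self-map of the finite set $\mathbb{Z} \cap (0,N)$, so bijectivity reduces to checking injectivity (or surjectivity).

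First I would verify that $\phi_d$ actually maps $\mathbb{Z} \cap (0,N)$ into itself. By definition, $\phi_d(n) = nd - N\lfloor nd/N \rfloor \in \mathbb{Z}$, and $\phi_d(n) = N\{nd/N\} \in [0,N)$. It remains to rule out $\phi_d(n) = 0$. But $\phi_d(n) = 0$ would mean $N \mid nd$; since $(d,N) = 1$ this forces $N \mid n$, which is impossible for $1 \leq n < N$. So $\phi_d(n) \in \{1,\dots,N-1\}$.

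Next I would check injectivity. If $\phi_d(n_1) = \phi_d(n_2)$ for $n_1,n_2 \in \mathbb{Z} \cap (0,N)$, then $n_1 d \equiv n_2 d \pmod{N}$, i.e.\ $N \mid (n_1-n_2)d$. Since $(d,N) = 1$, this gives $N \mid (n_1-n_2)$; combined with $|n_1-n_2| < N$, we conclude $n_1 = n_2$.

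Since $\phi_d$ is an injection from the finite set $\mathbb{Z} \cap (0,N)$ to itself, it is a bijection, which is the claim. There is no real obstacle here; the only point worth flagging is the use of $(d,N)=1$, which is invoked twice, once to ensure $\phi_d(n) \neq 0$ and once to cancel $d$ in the congruence.
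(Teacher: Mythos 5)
Your proof is correct and is essentially the paper's argument: both rest on cancelling $d$ in the congruence $dn_1 \equiv dn_2 \pmod{N}$ using $(d,N)=1$. The only cosmetic difference is that you deduce surjectivity from injectivity plus finiteness, while the paper exhibits a preimage directly by solving $dn \equiv m \pmod N$; your extra check that $\phi_d(n) \neq 0$ is a welcome detail the paper leaves implicit.
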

\begin{proof}
If $0 < n_1,n_2 < N$ then
$$
\phi_d(n_1) = \phi_d(n_2) \Leftrightarrow \left\{\frac{dn_1}{N}\right\} = \left\{\frac{dn_2}{N}\right\} \Leftrightarrow dn_1 \equiv dn_2 \pmod{N} \Leftrightarrow n_1 \equiv n_2 \pmod{N} \Leftrightarrow n_1 = n_2,
$$
which proves injectivity. \\
Next, suppose $m \in \mb{Z} \cap (0,N)$. As $N \nmid m$ and $(d,N) = 1$, we can find $0 < n  <N$ such that $dn \equiv m \pmod{N}$. Hence, $m = N\{dn/N\} = \phi_d(n)$, and $\phi_d$ is surjective.
\end{proof}
Below, as usual given sets $S,T$ we write their symmetric difference as 
$$
S \triangle T := (S \cup T) \bk (S\cap T) = (S \bk T) \cup (T \bk S). 
$$
The following lemma, which will be used in an essential way in Section \ref{sec:discrep}, establishes a relationship between the sizes of apparently unrelated sets $\mc{E}_a(N)$ and $\mc{E}_b(N)$, whenever $a,b \geq 2$ are both coprime to $N$. We will use it, in tandem with Proposition \ref{prop:recur}, to obtain information about $\mc{E}_b(N)$ with non-friable $b$ from information about $\mc{E}_a(N)$ with friable $a$.
\begin{lem} \label{lem:shiftEb}
Let $a,b \geq 2$ with $(ab,N) = 1$. Then
$$
|\mc{E}_b(N) \triangle \phi_a^{-1}(\mc{E}_b(N))| \leq 2|\mc{E}_a(N)|.
$$
\end{lem}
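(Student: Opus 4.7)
The plan is to exploit the multiplicativity of $\lambda$ and the cocycle-like relation between the maps $\phi_a$, $\phi_b$ and $\phi_{ab}$. First I would verify the basic identity $\phi_a \circ \phi_b = \phi_{ab} = \phi_b \circ \phi_a$ on $\mb{Z} \cap (0,N)$: this is a routine computation with fractional parts using that $a\lfloor bn/N \rfloor$ is an integer, so $\{a\phi_b(n)/N\} = \{abn/N\}$. This is the crucial commutativity needed below.

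Next, I would establish the key pointwise factorization
$$
\Lambda_{ab}(n) \;=\; \Lambda_a(n)\,\Lambda_b(\phi_a(n)) \;=\; \Lambda_b(n)\,\Lambda_a(\phi_b(n))
$$
for every $1 \leq n < N$. Unpacking the definition and using complete multiplicativity of $\lambda$ together with $\lambda(\phi_a(n))^2 = 1$, one finds
$$
\Lambda_a(n)\Lambda_b(\phi_a(n)) \;=\; \lambda(an)\lambda(\phi_a(n))\cdot\lambda(b)\lambda(\phi_a(n))\lambda(\phi_b(\phi_a(n))) \;=\; \lambda(abn)\lambda(\phi_{ab}(n)).
$$
Swapping the roles of $a$ and $b$ gives the other equality. (Here one uses $(ab,N)=1$ so that Lemma \ref{lem:phidInj} applies and $\phi_a,\phi_b$ are bijections of $\mb{Z} \cap (0,N)$.)

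Setting the two expressions equal, a sign $\Lambda_a(n)\Lambda_b(\phi_a(n))$ equals $-1$ precisely when $n$ lies in exactly one of $\mc{E}_a(N)$ and $\phi_a^{-1}(\mc{E}_b(N))$, and similarly for the right-hand side. This translates into the set-theoretic identity
$$
\mc{E}_a(N) \triangle \phi_a^{-1}(\mc{E}_b(N)) \;=\; \mc{E}_b(N) \triangle \phi_b^{-1}(\mc{E}_a(N)),
$$
which is the ``reciprocity'' relation advertised in Section \ref{subsec:prop21}. Denoting this common set by $\mc{C}$, we get $\phi_a^{-1}(\mc{E}_b(N)) = \mc{E}_a(N) \triangle \mc{C}$ and $\phi_b^{-1}(\mc{E}_a(N)) = \mc{E}_b(N) \triangle \mc{C}$, and a short symmetric-difference calculation (using only associativity of $\triangle$ and $X \triangle X = \emptyset$) gives
$$
\mc{E}_b(N) \triangle \phi_a^{-1}(\mc{E}_b(N)) \;=\; \mc{E}_a(N) \triangle \phi_b^{-1}(\mc{E}_a(N)).
$$

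Finally, applying $|X \triangle Y| \leq |X| + |Y|$ together with bijectivity of $\phi_b$ from Lemma \ref{lem:phidInj} (so that $|\phi_b^{-1}(\mc{E}_a(N))| = |\mc{E}_a(N)|$) yields $|\mc{E}_b(N) \triangle \phi_a^{-1}(\mc{E}_b(N))| \leq 2|\mc{E}_a(N)|$. There is no serious obstacle here; the only point requiring care is the verification of the pointwise factorization $\Lambda_{ab}(n) = \Lambda_a(n)\Lambda_b(\phi_a(n))$, since this is what moves the problem from the ``coupled'' set $\mc{E}_{ab}(N)$ to a symmetric statement involving only $\mc{E}_a(N)$ and $\mc{E}_b(N)$. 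Everything else is bookkeeping with symmetric differences.
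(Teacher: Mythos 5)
Your proof is correct and follows essentially the same route as the paper: both derive the reciprocity identity $\mc{E}_a(N) \triangle \phi_a^{-1}(\mc{E}_b(N)) = \mc{E}_b(N) \triangle \phi_b^{-1}(\mc{E}_a(N))$ from the two factorizations of $\Lambda_{ab}$ through $\phi_{ab} = \phi_a\circ\phi_b = \phi_b\circ\phi_a$, and then conclude by elementary symmetric-difference bookkeeping together with the bijectivity of $\phi_b$. Your closing step is in fact marginally cleaner, since the $\triangle$-algebra yields the exact identity $\mc{E}_b(N) \triangle \phi_a^{-1}(\mc{E}_b(N)) = \mc{E}_a(N) \triangle \phi_b^{-1}(\mc{E}_a(N))$, whereas the paper only establishes the containment of each half of that symmetric difference in $\mc{E}_a(N) \cup \phi_b^{-1}(\mc{E}_a(N))$ before applying the same cardinality bound.
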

\begin{proof}
Let $1 \leq n < N$. Observe the identity
$$
\phi_{ab}(n) = N\left\{\frac{abn}{N}\right\} = N\left\{\frac{a}{N} \cdot N\left\{\frac{bn}{N}\right\}\right\} = \phi_a (\phi_b(n)).
$$
Note then that there are signs $\e_1,\e_2,\e_3 \in \{-1,+1\}$ such that 
$$
\e_1 \lambda(abn) = \lambda(\phi_{ab}(n)) = \lambda(\phi_a (\phi_b(n))) = \e_2 \lambda(a\phi_b(n)) = \e_2 \lambda(a)\lambda(\phi_b(n)) = \e_2\e_3 \lambda(a)\lambda(bn),
$$
wherein 
\begin{itemize}
\item $\e_1 = \Lambda_{ab}(n) = -1$ iff $n \in \mc{E}_{ab}(N)$, 
\item $\e_2 = \Lambda_a(\phi_b(n)) = -1$ iff $\phi_b(n) \in \mc{E}_a(N)$ and 
\item $\e_3 = \Lambda_b(n) = -1$ iff $n \in \mc{E}_b(N)$. 
\end{itemize} 
Therefore, $\e_1 = -1$ if and only if precisely one of $\e_2$ and $\e_3$ is $-1$, so that
$$
\mc{E}_{ab}(N) = (\mc{E}_b(N) \cup \phi_b^{-1} (\mc{E}_a(N))) \bk (\mc{E}_b(N) \cap \phi_b^{-1}(\mc{E}_a(N))) = \mc{E}_b(N) \triangle \phi_b^{-1}(\mc{E}_a(N)).
$$
Swapping the roles of $a$ and $b$, we have symmetrically that $\phi_{ab}(n) = \phi_b (\phi_a(n))$, leading to the second decomposition
$$
\mc{E}_{ab}(N) = \mc{E}_a(N) \triangle \phi_a^{-1}(\mc{E}_b(N)).
$$
Thus, for any $a,b \geq 2$ coprime to $N$ we obtain the equality of sets
$$
\mc{E}_a(N) \triangle \phi_a^{-1}(\mc{E}_b(N)) = \mc{E}_b(N) \triangle \phi_b^{-1}(\mc{E}_a(N)).
$$
From this identity, we deduce
\begin{align*}
\mc{E}_b(N) \bk \phi_a^{-1}(\mc{E}_b(N)) &\subseteq [\left(\mc{E}_b(N) \bk \phi_b^{-1}(\mc{E}_a(N))\right) \bk \phi_a^{-1}(\mc{E}_b(N))] \cup \phi_b^{-1}(\mc{E}_a(N)) \\
&\subseteq [(\mc{E}_b(N) \triangle \phi_b^{-1}(\mc{E}_a(N)) \bk \phi_a^{-1}(\mc{E}_b(N))] \cup \phi_b^{-1}(\mc{E}_a(N)) \\
&= [(\mc{E}_a(N) \triangle \phi_a^{-1}(\mc{E}_b(N)) \bk \phi_a^{-1}(\mc{E}_b(N))] \cup \phi_b^{-1}(\mc{E}_a(N)) \\
&\subseteq \mc{E}_a(N) \cup \phi_b^{-1}(\mc{E}_a(N)).
\end{align*}
By a similar argument, we obtain also that
$$
\phi_a^{-1}(\mc{E}_b(N)) \bk \mc{E}_b(N) \subseteq \phi_b^{-1}(\mc{E}_a(N)) \cup \mc{E}_a(N).
$$
Since by Lemma \ref{lem:phidInj} $\phi_b$ is a bijection, we obtain that
$$
|\mc{E}_b(N) \triangle \phi_a^{-1}(\mc{E}_b(N))| \leq |\mc{E}_a(N) \cup \phi_b^{-1}(\mc{E}_a(N))| \leq |\mc{E}_a(N)| + |\phi_b^{-1}(\mc{E}_a(N))| = 2|\mc{E}_a(N)|,
$$
as claimed.
\end{proof}
\subsection{First observations about $g(d)$}
Using Lemma \ref{lem:phidInj}, we may establish two basic cases of our bounds on $g(d)$. The arguments will help motivate our treatment of more general values of $d$ in Section \ref{subsec:Pierce}.
\begin{lem} \label{lem:g23}
Suppose $(N,6) = 1$. Then $g(2) \leq 2$ and $g(3) \leq 6$.  
\end{lem}
\begin{proof}
Consider first the case $d = 2$. By \eqref{eq:Edinit}, $\Lambda_2(n) = +1$ unless $N/2 < n < N$. In this case, we have $N-n \in (0,N/2)$, whence $\Lambda_2(N-n) = + 1$. Note also that $\{m-\alpha\} = 1-\{\alpha\}$ for $0 < \alpha < m$ and $m \in \mb{N}$. Recalling that $m \in \mc{E}(N)$ if and only if $\lambda(m) = \eta \lambda(N-m)$, we have the relation
\begin{align*}
+1 = \Lambda_2(N-n) &= \lambda(2(N-n)) \lambda(N\{2(N-n)/N\}) = \lambda(2)\lambda(N-n) \lambda (N - N\{2n/N\}) \\
&= \lambda(2)(-\eta \lambda(n))(-\eta \lambda(N\{2n/N\})) =  \Lambda_2(n),
\end{align*}	
as long as neither $n$ nor $N\{2n/N\} = \phi_2(n)$ belongs to $\mc{E}(N)$. By Lemma \ref{lem:phidInj}, there is at most one $n$ that maps to each $\phi_2(n) \in \mc{E}(N)$. Thus, the number of exceptions to $\Lambda_2(n) = +1$ satisfies $|\mc{E}_2(N)| \leq 2|\mc{E}(N)|$  whence $g(2) \leq 2$ as claimed. \\
The case $d = 3$ is slightly more complicated. We note once again that, using \eqref{eq:Edinit}, $\Lambda_3(n) = +1$ unless $n \in (N/3,N)$. If $n \in (N/2,N)$ then as in the case $d = 2$ we have
$$
\Lambda_3(N-n) = \lambda(3) \lambda(N-n) \lambda(N-\phi_3(n)) = \Lambda_3(n),
$$
unless $n$ or $\phi_3(n) \in \mc{E}(N)$. Since $\phi_3$ is injective by Lemma \ref{lem:phidInj}, with $\leq 2|\mc{E}(N)|$ exceptions $n$ we can thus associate each $N/2 < n < N$ with a unique $n' = N-n \in (0, N/2)$ such that $\Lambda_3(n') = \Lambda_3(n)$. This can only yield an element of $\mc{E}_3(N)$ if $n' \in (N/3, N/2)$. We thus find that
\begin{equation}\label{eq:redtocentre}
|\mc{E}_3(N)| = |\mc{E}_3(N) \cap (N/3,N/2)| + |\mc{E}_3(N) \cap (N/2,N)| \leq 2|\mc{E}_3(N) \cap (N/3,N/2)| + 2|\mc{E}(N)|.
\end{equation}
It remains to consider when $n \in (N/3,N/2)$. In this case we see that $N-2n \in (0,N/3)$, so that $\Lambda_3(N-2n) = +1$. Observe also that unless $2n$ or $3(N-2n) \in \mc{E}(N)$, we get that $\lambda(N-2n) = -\eta\lambda(2n)$ and
$$
\lambda(3(N-2n)) = -\eta\lambda(N-3(N-2n)) = -\eta\lambda(6n-2N) = -\eta\lambda(2)\lambda(3n-N) = -\eta\lambda(2)\lambda(\phi_3(n)).
$$
Since the maps $n\mapsto 2n$ and $n \mapsto 3(N-2n)$ are injective, it follows that for all but $\leq 2|\mc{E}(N)|$ new exceptions from $(N/3,N/2)$,
$$
\Lambda_3(n) = \lambda(3n) \lambda(\phi_3(n)) = -\eta\lambda(2)\lambda(3n)\lambda(3(N-2n)) = -\eta\lambda(2n)\lambda(N-2n) = +1.
$$
Combined with \eqref{eq:redtocentre}, this gives
$$
|\mc{E}_3(N)| \leq 2|\mc{E}(N)| + 4|\mc{E}(N)| \leq 6 |\mc{E}(N)|,
$$
and $g(3) \leq 6$, as claimed.
\end{proof}
The following simple lemma reveals that $g$ satisfies a weak subadditivity property, which will be crucial to the inductive proof of Proposition \ref{prop:recur}. 
\begin{lem}\label{lem:composite}
For any $d\geq 2$ coprime to $N$ we have
$$
g(d) \leq \Omega(d) \sum_{p^k||d} k g(p).
$$
\end{lem}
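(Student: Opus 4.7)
The plan is to deduce the bound from a subadditivity property $g(ab) \leq g(a) + g(b)$ that is already contained in the argument for Lemma \ref{lem:shiftEb}, combined with an elementary induction on the number of prime factors of $d$.

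First, I extract the key identity. The proof of Lemma \ref{lem:shiftEb} shows that whenever $(ab,N) = 1$,
$$
\mc{E}_{ab}(N) = \mc{E}_b(N) \triangle \phi_b^{-1}(\mc{E}_a(N)).
$$
Since $\phi_b$ is a bijection of $\mb{Z} \cap (0,N)$ by Lemma \ref{lem:phidInj}, we have $|\phi_b^{-1}(\mc{E}_a(N))| = |\mc{E}_a(N)|$, and the trivial bound $|S \triangle T| \leq |S| + |T|$ yields
$$
|\mc{E}_{ab}(N)| \leq |\mc{E}_a(N)| + |\mc{E}_b(N)|,
$$
that is, $g(ab) \leq g(a) + g(b)$ whenever $(ab,N) = 1$.

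Next, I iterate this estimate. For a prime power $p^k$ coprime to $N$, writing $p^k = p \cdot p^{k-1}$ and inducting on $k$ yields $g(p^k) \leq k\, g(p)$. For an arbitrary $d$ with $(d,N) = 1$, factor $d = \prod_{p \mid d} p^{k_p}$ and apply the subadditivity iteratively across the pairwise coprime prime-power factors to obtain
$$
g(d) \leq \sum_{p^{k_p} || d} g(p^{k_p}) \leq \sum_{p^{k_p} || d} k_p\, g(p).
$$
Since $\Omega(d) \geq 1$, the claimed bound follows immediately (in fact, the argument gives the slightly stronger estimate without the prefactor $\Omega(d)$, but the stated form suffices).

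There is no significant obstacle here: the decomposition underlying subadditivity is exactly the one already established inside the proof of Lemma \ref{lem:shiftEb}, and the induction on $\Omega$ is routine. The only point to monitor is that every factor arising in the induction remains coprime to $N$, which holds since all prime divisors of $d$ are coprime to $N$, so the hypotheses of Lemma \ref{lem:shiftEb} apply at each step.
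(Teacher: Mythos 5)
Your proof is correct, but it takes a genuinely different route from the paper's. The paper proves \eqref{eq:addpropG} by telescoping the Fourier-coefficient differences $S_{\lambda}(m_1\cdots m_k a/N) - \lambda(m_1\cdots m_k)S_{\lambda}(a/N)$ into a sum of $k$ single-factor differences, applying Cauchy--Schwarz (which is where the extra factor of $k$, and hence the $\Omega(d)$ prefactor, comes from), and converting back to $|\mc{E}_{m_1\cdots m_k}(N)|$ via Lemma \ref{lem:reltoEd}. You instead work directly with the exceptional sets, using the intermediate identity $\mc{E}_{ab}(N) = \mc{E}_b(N) \triangle \phi_b^{-1}(\mc{E}_a(N))$ established inside the proof of Lemma \ref{lem:shiftEb} together with the bijectivity of $\phi_b$ from Lemma \ref{lem:phidInj}; the trivial bound $|S \triangle T| \leq |S| + |T|$ then gives genuine subadditivity $g(ab) \leq g(a) + g(b)$, and the induction over prime-power factors is routine (all factors are $\geq 2$ and coprime to $N$, so Lemma \ref{lem:shiftEb}'s hypotheses hold at each step, and there is no circularity since that lemma precedes this one). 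Your argument is more elementary and yields the strictly sharper conclusion $g(d) \leq \sum_{p^k \| d} k\, g(p)$, dispensing with the $\Omega(d)$ factor; since $\Omega(d) \geq 1$, this implies the stated bound. The weaker form with the $\Omega(d)^2$-type loss is all that is used downstream (e.g.\ in \eqref{eq:gdcomp} and the friable bound $g(r) \ll (\log r)^2 2^{q^2}$), so nothing in the paper changes, but your version would tighten those constants slightly.
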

\begin{proof}
We will show the following more general statement: if $k \geq 1$ and $1 < m_1,\ldots,m_k < N$ are all coprime to $N$ then
\begin{equation}\label{eq:addpropG}
g(m_1\cdots m_k) \leq k (g(m_1) + \cdots + g(m_k)).
\end{equation}
The claim follows from this on factoring $d = p_1 \cdots p_k$, where $p_1 \leq \cdots \leq p_k$ are (not necessarily distinct) primes, and collecting the common values of $g(p_j)$. \\
Observe that for each $a \in (\mb{Z}/N\mb{Z})^\times$ we get the telescoping sum
\begin{align*}
&S_{\lambda}(m_1\cdots m_k a/N) - \lambda(m_1\cdots m_k) S_{\lambda}(a/N) \\
&= \left(S_{\lambda}(m_1\cdots m_k a/N) - \lambda(m_k)S_{\lambda}(am_1\cdots m_{k-1}/N)\right) + \lambda(m_k) \left(S_{\lambda}(am_1\cdots m_{k-1}/N) - \lambda(m_{k-1})S_{\lambda}(am_1\cdots m_{k-2}/N)\right) \\
&+\cdots + \lambda(m_2\cdots m_k) \left(S_{\lambda}(am_1/N) - \lambda(m_1)S_{\lambda}(a/N)\right) \\
&= \sum_{1 \leq j \leq k} \lambda(m_{j+1}\cdots m_k) \left(S_{\lambda}(am_1\cdots m_j/N) - \lambda(m_j) S_{\lambda}(am_1\cdots m_{j-1}/N)\right),
\end{align*}
interpreting the products $m_{j+1}\cdots m_k$ and $m_1\cdots m_{j-1}$ as $1$ when $j = k$ and $j = 1$, respectively. By the Cauchy-Schwarz inequality, 
\begin{align*}
&\frac{1}{N}\sum_{a \pmod{N}} \left|S_{\lambda}\left(\frac{m_1\cdots m_k a}{N}\right) - \lambda(m_1\cdots m_k)S_{\lambda}\left(\frac{a}{N}\right)\right|^2 \\
&= \frac{1}{N}\sum_{a \pmod{N}} \left|\sum_{1 \leq j \leq k} \lambda(m_{j+1}\cdots m_k) \left(S_{\lambda}\left(\frac{am_1\cdots m_j}{N}\right) - \lambda(m_j)S_{\lambda}\left(\frac{am_1\cdots m_{j-1}}{N}\right)\right)\right|^2 \\
&\leq k \sum_{1 \leq j \leq k} \frac{1}{N} \sum_{b \pmod{N}} \left|S_{\lambda}\left(\frac{bm_j}{N}\right) - \lambda(m_j)S_{\lambda}\left(\frac{b}{N}\right)\right|^2,
\end{align*}
where in the last line we made the change of variables $b \equiv am_1\cdots m_{j-1} \pmod{N}$ for each $1 \leq j \leq k$. In light of Lemma \ref{lem:reltoEd}, we thus find that 
$$
4g(m_1\cdots m_k) |\mc{E}(N)| = 4|\mc{E}_{m_1\cdots m_k}(N)| \leq k \sum_{1 \leq j \leq k} \left(4 |\mc{E}_{m_j}(N)|\right) = 4|\mc{E}(N)| \left(k \sum_{1 \leq j \leq k} g(m_j)\right).
$$
This implies the claim.
\end{proof}
\subsection{The sets $\mc{E}_p(N)$ and Pierce expansions} \label{subsec:Pierce}
In light of the previous two lemmas, we need to obtain bounds for $|\mc{E}_p(N)|$ with prime $p > 3$, and to do so we revisit the arguments of \cite[Sec. 3]{Man} towards relating the dilated exponential sum values $S_{\lambda}(ap/N)$ to $S_{\lambda}(a/N)$, for prime $N$.  Our exposition will look significantly different here compared to what appears in \cite{Man}.\\
Let $p$ be prime, and let $1 \leq n,m < N$ with $np \equiv m \pmod{N}$. Then we may write 
$$
np = m+jN, \text{ where } j = \lfloor np/N\rfloor \text{ and } m = N\{np/N\} = \phi_p(n).
$$
Given $1 \leq n < N$, define 
$$
\theta(n) := n\{N/n\} = N - n\lfloor N/n\rfloor.
$$ 
This is a self-map on $\mb{Z} \cap (0,N)$, which we will iterate in the sequel. In \cite[Sec. 3.1]{Man}, for each $r \geq 1$ we constructed a map $\psi_r$ with the property that if $n \in (\tfrac{N}{r+1},\tfrac{N}{r})$ then $\psi_r$ mapped the pair $(m,j) = (\phi_p(n),\lfloor np/N\rfloor)$ to an image pair $(m',j')$, satisfying $0 \leq j' < j$ and
$$
m'+j'N = pN - r(m+jN) = p(N-rn) = p\theta(n) \in \left(0, \frac{pN}{r+1}\right)
$$ 
(here we are using the fact that $N$ is prime, as otherwise $\theta(n) = 0$ when $n = N/r$, say). In particular, $(m',j') = \left(\phi_p(\theta(n)), \left\lfloor p\theta(n)/N\right\rfloor\right)$, and 
$$
\theta(n) \in \left(0,\frac{N}{r+1}\right) \text{ whenever } n \in \left(\frac{N}{r+1}, \frac{N}{r}\right).
$$
Moreover, provided $rn, \phi_p(rn) \notin \mc{E}(N)$ so that
$$
\lambda(rn) = -\eta\lambda(N-rn) \text{ and } \lambda(\phi_p(\theta(n))) = -\eta\lambda(N-\phi_p(\theta(n)))
$$ 
then \cite[Lem. 3.1(d)]{Man} gives the explicit transformation
\begin{align*}
\Lambda_p(n) &= \lambda(pn)\lambda(\phi_p(n)) = \lambda(m+jN)\lambda(m) = \lambda(rm) \lambda(N\{rm/N\}) \lambda(m'+j'N) \lambda(m') \\
&= \lambda(r\phi_p(n)) \lambda(\phi_r(\phi_p(n))) \lambda(p\theta(n)) \lambda(\phi_p(\theta(n))) \\
&= \Lambda_r(\phi_p(n)) \Lambda_p(\theta(n)).
\end{align*}
(This is a generalisation of the arguments in Lemma \ref{lem:g23}.) Now, since the maps $n \mapsto rn$ and $n \mapsto \phi_p(n)$ are injective by Lemma \ref{lem:phidInj}, the number of exceptions that this process incurs is $\leq 2 |\mc{E}(N)|$.
%
Finally, we have that $0 \leq j' < j$ (equivalently, $0 \leq \left\lfloor p\theta(n)/N\right\rfloor < \left\lfloor pn/N\right\rfloor$), so the process can only be iterated at most $p$ times before $j' = 0$ (equivalently, $\theta^{(k)}(n) < N/p$ for some $k\geq 1$), with $\leq 2|\mc{E}(N)|$ exceptions in each instance. Once $j' = 0$ we get immediately that $\Lambda_p(\theta^{(k)}(n)) = \lambda(m') \lambda(m'+j'N) = +1$ by \eqref{eq:Edinit}. Thus, with the exception of at most $\leq 2p|\mc{E}(N)|$ elements $n$, we get that
$$
\Lambda_p(n) = \prod_{0 \leq j \leq k_p-1} \Lambda_{r_{j+1}}(\phi_p(\theta^{(j)}(n)))
$$
where for each $j \geq 1$, 
$$
r_{j+1} := \lfloor N/\theta^{(j)}(n)\rfloor,
$$ 
and $k_p=k_p(n)$ is the minimum index $k$ for which $r_{k+1} \geq p$ (equivalently, $\theta^{(k_p)}(n) < N/p$). We note that $\theta^{(j+1)}(n) = \theta^{(j)}(n) \{N/\theta^{(j)}(n)\} < \theta^{(j)}(n)$, and, writing $n_{\ell} := \theta^{(\ell)}(n) = n_{\ell-1} \{N/n_{\ell-1}\}$ for $0 \leq \ell < k_p$,
$$
r_{j+1} = \left\lfloor\frac{N}{n_{j-1}} \left\{\frac{N}{n_{j-1}}\right\}^{-1}\right\rfloor = 1 + \left\lfloor \left\lfloor \frac{N}{n_{j-1}} \right\rfloor \left\{\frac{N}{n_{j-1}}\right\}^{-1}\right\rfloor \geq 1+r_j.
$$
Therefore, $k_p(n)$ is well-defined with $1 \leq k_p(n) < p$ for all $1 \leq n < N$. As in \cite{Man}, we refer to the tuple $\sg_p(n) := (r_1,\ldots,r_k)$ as the \emph{$p$-signature} of $n$.
We thus summarise the above observations as follows.
\begin{lem} \label{lem:recurrence}
Let $2 \leq p < N$ be prime. Then, with the exception of $\leq 2p|\mc{E}(N)|$ integers $n < N$, whenever $n$ has $p$-signature $\sg_p(n) = (r_1,\ldots,r_{k_p})$, we have
\begin{equation} \label{eq:LambdaRs}
\Lambda_p(n) = \prod_{1 \leq j \leq k_p} \Lambda_{r_{j}}(\phi_p(\theta^{(j-1)}(n))).
\end{equation}
%
\end{lem}
\noindent Lemma \ref{lem:recurrence} indicates that we may obtain recursive information about $\mc{E}_p(N)$ in terms of the sets $\mc{E}_r(N)$ with $1 \leq r < p$ arising in the algorithm described above.\\
Interestingly, the elements of the $p$-signature are associated with the \emph{Pierce expansion} (also called the \emph{Ostrogradsky expansion} by some authors) of the rational number $n/N$ (see e.g. \cite{Sha} for a nice exposition), which we now describe. \\
Given $\alpha \in (0,1]$, the Pierce expansion of $\alpha$ is the expression
$$
\alpha = \frac{1}{r_1} - \frac{1}{r_1r_2} + \frac{1}{r_1r_2r_3} - \cdots,
$$
given by a (possibly infinite) strictly increasing sequence $(r_j)_{j\geq 1}$ of positive integers $r_j = r_j(\alpha)$. It is uniquely determined unless the expansion terminates, which occurs if and only if $\alpha \in \mb{Q}$. In this case, we can write
$$
\alpha = \frac{1}{r_1} - \cdots + \frac{(-1)^N}{r_1\cdots r_{N+1}},
$$
which is also a uniquely determined representation once we assert that\footnote{This restriction avoids the possible ambiguity arising from the substitution
$\tfrac{1}{r-1} = \tfrac{1}{r} - \tfrac{1}{r(r-1)}$. Though we shall work with Pierce expansions of rationals here, we will not employ the full Pierce expansion of any of these (since as $p < N$ here, each $r_j < p < N$ so that $\theta^{(j+1)}(n) > 1$; thus, we can never produce $n/N$ using a Pierce expansion that uses only $r_j < p$), and therefore this point of ambiguity is irrelevant to us.}
 $r_N < r_{N+1} - 1$.
The following lemma illustrates the relationship between the signature of $n$ and the Pierce expansion of $n/N$.
\begin{lem} \label{lem:preimages}
Let $n < N$, and given $k \geq 1$ set $r_j := r_j(n/N)$ for all $1 \leq j \leq k$. Then 
$$
n = N\sum_{1 \leq j \leq k} \frac{(-1)^{j-1}}{r_1\cdots r_j} + \theta^{(k)}(n)\frac{(-1)^{k}}{r_1\cdots r_{k}}.
$$
\end{lem}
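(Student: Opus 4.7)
The plan is a straightforward induction on $k$, after first identifying the iterates of $\theta$ with the remainders in the Pierce expansion algorithm. Set $\alpha_j := \theta^{(j)}(n)/N$, with the convention $\theta^{(0)}(n) = n$. Then
$$
\alpha_{j+1} = \frac{\theta(\theta^{(j)}(n))}{N} = \frac{N - \theta^{(j)}(n)\lfloor N/\theta^{(j)}(n)\rfloor}{N} = 1 - r_{j+1}\alpha_j,
$$
where $r_{j+1} = \lfloor N/\theta^{(j)}(n)\rfloor$. This is precisely the recurrence driving the Pierce expansion of $\alpha_0 = n/N$, so the $r_j$ of the $p$-signature agree with the $r_j(n/N)$ coming from the Pierce expansion, as implicitly claimed in the statement.

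The claim is then equivalent, after dividing by $N$, to the identity
$$
\alpha_0 \;=\; \sum_{1 \leq j \leq k} \frac{(-1)^{j-1}}{r_1\cdots r_j} + \frac{(-1)^{k}\alpha_k}{r_1\cdots r_k}.
$$
For $k=1$ this reads $\alpha_0 = 1/r_1 - \alpha_1/r_1$, which is just the rearrangement of $\alpha_1 = 1 - r_1\alpha_0$. For the inductive step, assuming the identity for $k$, substitute $\alpha_k = (1-\alpha_{k+1})/r_{k+1}$ into the tail term:
$$
\frac{(-1)^{k}\alpha_k}{r_1\cdots r_k} \;=\; \frac{(-1)^{k}}{r_1\cdots r_{k+1}} + \frac{(-1)^{k+1}\alpha_{k+1}}{r_1\cdots r_{k+1}},
$$
which peels off the $j=k+1$ summand of the desired sum and leaves the correct remainder term for index $k+1$. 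Multiplying through by $N$ at the end yields the statement of the lemma.

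There is no real obstacle here — the only thing worth being careful about is the bookkeeping of signs, and the observation that because $n < N$ and each $r_j < p < N$ (so in particular $\theta^{(j)}(n) \geq 1$ as long as the process has not terminated), the Pierce iterates $\alpha_j$ are genuinely the normalised values $\theta^{(j)}(n)/N$ and no ambiguity in the Pierce expansion arises in the finite range of $j$ we consider.
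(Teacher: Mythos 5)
Your proof is correct and follows essentially the same route as the paper's: both set $\nu_j := \theta^{(j)}(n)/N$, observe the recurrence $\nu_{j+1} = 1 - r_{j+1}\nu_j$, and unroll it by induction to peel off one term of the alternating sum at each step. The only cosmetic difference is that you write out the inductive step formally where the paper telescopes the recurrence directly.
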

\begin{proof}
As above, write $n_j$ to denote $\theta^{(j)}(n)$, and set $\nu_j := n_j/N$ for each $0 \leq j \leq k-1$. Then
$$
n_{j+1} = N-r_{j+1}n_j \Leftrightarrow \nu_{j+1} = 1-r_{j+1}\nu_j.
$$
Rearranging this and arguing by induction, we see that 
\begin{align*}
n &= N\nu_0 = \frac{N}{r_1}\left(1-\nu_1\right) = \frac{N}{r_1}\left(1-\frac{1}{r_2}\left(1-\nu_2\right)\right) \\
&= N\left(\frac{1}{r_1} - \frac{1}{r_1r_2} + \frac{\nu_2}{r_1r_2}\right) = \cdots \\
&= N\sum_{1 \leq j \leq k} \frac{(-1)^{j-1}}{r_1\cdots r_j} +  \frac{(-1)^{k} N\nu_k}{r_1\cdots r_k}.
\end{align*}
Since $N\nu_k = \theta^{(k)}(n)$,
the claim follows. 
\end{proof}
Combining Lemmas \ref{lem:composite}, \ref{lem:recurrence} and \ref{lem:preimages}, we may now prove our recursive bound, Proposition \ref{prop:recur}.
\begin{proof}[Proof of Proposition \ref{prop:recur}]
We will prove the first claim by induction. When $r = 2$ we already have $g(r) \leq 2 \leq 16$ by Lemma \ref{lem:g23}. \\
Next, let $R \geq 3$ and assume that $g(r) \leq 2^{r^2}$ for all $r < R$. In particular, it holds for all such \emph{prime} $r$. If $R$ is composite then as $P^+(R) \leq R/2$, Lemma \ref{lem:composite} yields
\begin{align} \label{eq:gdcomp}
g(R) \leq \Omega(R) \sum_{q^k||R} kg(q) \leq \Omega(R)^2 2^{P^+(R)^2} \leq (2\log R)^2 2^{(R/2)^2} \leq 2^{R^2}
\end{align}
It remains to consider when $R = p$ is prime. Representing the indicator function for $\mc{E}_p(N)$ as $1_{\mc{E}_p(N)}(n) = (1-\Lambda_p(n))/2$ and applying Lemma \ref{lem:recurrence}, we have
\begin{align*}
|\mc{E}_p(N)| &= \frac{1}{2}\sum_{n < N} (1-\Lambda_p(n)) = \frac{1}{2}\sum_{\ss{1 \leq r_1 < \cdots < r_k < p \\ k \geq 1}} \sum_{\ss{n < N \\ \sg_p(n) = \mbf{r}}} (1-\Lambda_p(n)) \\
&\leq 2p|\mc{E}(N)| + \frac{1}{2}\sum_{\ss{1 \leq r_1 < \cdots < r_k < p \\ k \geq 1}}\sum_{\ss{n < N \\ \sg_p(n) = \mbf{r}}} \left(1-\Lambda_{r_1}(\phi_p(n)) \cdots \Lambda_{r_k}(\phi_p(\theta^{(k-1)}(n)))\right).
\end{align*}
Using the simple inequality $0 \leq 1-x_1\cdots x_k \leq \sum_{1 \leq j \leq k} (1-x_j)$ for $x_j \in \{-1,+1\}$, we get from the previous line that 
\begin{align*}
|\mc{E}_p(N)| &\leq 2p|\mc{E}(N)| + \frac{1}{2}\sum_{\ss{1 \leq r_1 < \cdots < r_k < p \\ k \geq 1}}\sum_{\ss{n < N \\ \sg_p(n) = \mbf{r}}} \sum_{1 \leq j \leq k} \left(1-\Lambda_{r_j}(\phi_p(\theta^{(j-1)}(n)))\right) \\
&= 2p|\mc{E}(N)| + \sum_{\ss{1 \leq r_1 < \cdots < r_k < p \\ k \geq 1}}\sum_{1 \leq j \leq k} |\{n < N : \sg_p(n) = \mbf{r}, \, \theta^{(j-1)}(n) \in \phi_p^{-1}(\mc{E}_{r_j}(N))\}|.
\end{align*}
Grouping together terms according to the value of $r_j(n/N)$, and noting that since the sequence $(r_j(n/N))_j$ is increasing, at most one $j$ can yield $r_j(n/N) = r$, we get
\begin{align*}
|\mc{E}_p(N)| \leq 2p|\mc{E}(N)| + \sum_{1 \leq r < p} |\{n  < N : \exists j \geq 1 \text{ with } r_j(n/N) = r, \theta^{(j-1)}(n) \in \phi_p^{-1}(\mc{E}_r(N))\}|.
\end{align*}
By definition we have $r_j(n/N) = r$ if and only if
$\theta^{(j-1)}(n) \in (N/(r+1),N/r)$, for prime $N$. We thus find that
\begin{align}\label{eq:nurIntro}
|\mc{E}_p(N)| &\leq 2p|\mc{E}(N)| + \sum_{1 \leq r < p} \sum_{\ss{N/(r+1) < m < N/r \\ m \in \phi_p^{-1}(\mc{E}_r(N))}}  |\{n  < N : \exists j \geq 1 \text{ with } \theta^{(j-1)}(n) = m \}| \nonumber\\ 
&=: 2p|\mc{E}(N)| + \sum_{1 \leq r < p} \sum_{\ss{N/(r+1) < m < N/r \\ m \in \phi_p^{-1}(\mc{E}_r(N))}} \nu_r(m).
\end{align}
By Lemma \ref{lem:preimages}, the number $\nu_r(m)$ of integers $1 \leq n < N$ such that $\theta^{(j-1)}(n) = m$ and $r_j(n/N) = r$ for some $j \geq 1$ is bounded above by the number of non-empty subsets $R=\{r_1,\ldots,r_{j-1}\} \subseteq \{1,\ldots,r-1\}$, with any such subset $R$ giving the \emph{potential} preimage
$$
n_R = N\sum_{1 \leq i \leq j-1} \frac{(-1)^{i-1}}{r_1\cdots r_i} + m\frac{(-1)^{j-1}}{r_1\cdots r_{j-1}}
$$
(note however that this is \emph{not} guaranteed to be an integer). Therefore, $\nu_r(m) \leq 2^{r-1}$ uniformly over all $N/(r+1) < m < N/r$, and hence
\begin{align} \label{eq:toRecur}
|\mc{E}_p(N)| 
&\leq 2p |\mc{E}(N)| +\sum_{1 \leq r < p} 2^{r-1}|\phi_p^{-1}(\mc{E}_r(N)) \cap (N/(r+1),N/r)|.
\end{align}
Applying Lemma \ref{lem:phidInj}, we (crudely) have
$$
|\phi_p^{-1}(\mc{E}_r(N)) \cap (N/(r+1),N/r)| \leq |\phi_p^{-1}(\mc{E}_r(N))| = |\mc{E}_r(N)| = g(r)|\mc{E}(N)|.
$$
Inserting this into \eqref{eq:toRecur} and dividing through by $|\mc{E}(N)|$, we thus obtain the relation
$$
g(p) \leq 2p + \sum_{1 \leq r < p} 2^{r-1} g(r).
$$
Applying our induction hypothesis $g(r) \leq 2^{r^2}$ for all $r < p$, we finally obtain
$$
g(p) \leq 2p + \sum_{1 \leq r < p} 2^{r^2+r-1} \leq 2p + 2^{(p-1)^2 + p-2} + (p-2) 2^{(p-2)^2 + p-3} < 2^{p^2},
$$
for any $p \geq 3$. The first claim therefore follows. \\
To prove the second claim, we combine Lemma \ref{lem:composite} with the bound $\Omega(r) \leq 2\log r$ to get that if $P^+(r) \leq q$ then
$$
g(r) \leq \Omega(r)^2 \max_{p |r} g(p) \leq 4(\log r)^2 2^{q^2},
$$
and the claim follows.
\end{proof}
\begin{rem} \label{rem:nurprob}
The recursive bound $g(r) \leq 2^{r^2}$ given here would benefit significantly from a more refined analysis of the map $\nu_r(m)$ defined in \eqref{eq:nurIntro}. For instance, we show in the appendix to this paper that
$$
\sum_{N/(r+1) < n < N/r} \nu_r(n) \ll \frac{N \log r}{r},
$$
which suggests that $\nu_r(m)$ is of size\footnote{We thank Tom Bloom for pointing out an error in this estimate in a previous version of this argument.} $r \log r$ on average, and thus a loss of $2^{r-1}$, while potentially unavoidable pointwise in $m$, can be significantly improved for typical $m$. \\
On a purely heuristic level, if we expect that $\nu_r(m) \approx r \log r$ on average on $\phi_p^{-1}(\mc{E}_r(N)) \cap (N/(r+1),N/r)$ as well, and that $|\phi_p^{-1}(\mc{E}_r(N)) \cap (N/(r+1),N/r)| \approx |\mc{E}_r(N)|/r(r+1)$ then the recursive bound may be improved (heuristically) to
$$
g(p)|\mc{E}(N)| \lessapprox 2p|\mc{E}(N)| +\sum_{1 \leq r < p} r \log r \cdot \frac{|\mc{E}_r(N)|}{r^2} = |\mc{E}(N)|\left(2p + \sum_{1 \leq r < p} \frac{g(r) \log r}{r}\right),
$$
which admits a recursive solution of the shape $g(r) = e^{c(\log r)^2}$, and expands significantly the realm of application of the lemma. \\
In the context of our main results, moreover, it would allow a far larger choice of $q$. For instance, in order to get a non-trivial lower bound for the number of signs $(+,+)$ or $(-,-)$, it suffices to get a contradiction to the condition $|\mc{E}(N)| \ll Ne^{-\sqrt{\log N}}$ (see Section \ref{subsec:prop14}). Assuming we get an upper bound of  the shape 
$$
|\mc{E}_b(N)| \ll (\log b)^2 e^{c(\log q)^2}|\mc{E}(N)| \text{ for all } P^+(b) \leq q
$$
now allowing $q = \exp(c'(\log N)^{1/4})$ expands the range of $b$ to any $1 \leq b<N$. It is clear that, then we would attain ranges $P$ such that
$$
\max_{\ss{\chi \pmod{N} \\ \chi \neq \chi_0}} \frac{1}{\pi(2P)-\pi(P)} \left|\sum_{p \sim P} \chi(p)\right| = o(1),
$$
under a far weaker zero-free region\footnote{For example, a Littlewood zero-free region of the shape $\text{Re}(s) > 1-\tfrac{c\log\log N}{\log N}$, with $c > 0$ sufficiently small, and $|\text{Im}(s)| \leq (\log N)^3$ should suffice.} assumption. \\
If we punted the objective of obtaining a ``pure'' recursive bound $|\mc{E}_d(N)| \leq f(d)|\mc{E}(N)|$, for some function $f$, and instead allowed for additional $o(N)$ terms then more options are available; however, entwining them with a sum over $1 \leq r < d$ makes it a delicate matter to determine which such $o(N)$ terms would admit an improved bound on $|\mc{E}_d(N)|$. \\
A second moment estimate for $\nu_r$ might allow further savings (via H\"{o}lder's inequality) from conditioning on $n$ belonging to the \emph{a priori} sparse sets $\mc{E}_r(N)$. However, the iterative nature of the definition of $\nu_r(m)$ makes obtaining such second moment bounds somewhat challenging.
We plan to return to this task on another occasion.
\end{rem}
\section{Proof of Proposition \ref{prop:approxDil}} \label{sec:discrep}
In the previous section we showed that if $|\mc{E}(N)| = o(N)$ and $q$ is chosen to be suitably slowly growing as a function of $N$, then $|\mc{E}_r(N)| = o(N)$ in a relatively wide range of $q$-friable integers $r$. Since $q = O(\sqrt{\log N})$, however, this estimate is of limited direct utility. \\
In order to upgrade that result to non-friable $r$ we will study the distribution of $\mc{E}_b(N)$ inside of $\mb{Z} \cap (0,N)$, bolstered crucially by Lemma \ref{lem:shiftEb}. That result shows that for (possibly non-friable) large $b$, $\mc{E}_b(N)$ is well-covered by the shifts $\phi_a^{-1}(\mc{E}_b(N))$ for any $a \geq 2$ for which $|\mc{E}_a(N)|$ is small. We will therefore use this observation by taking $a$ to be $q$-friable as above. \\
Our key result in this section is the following estimate for exponential sums supported on $\mc{E}_b(N)$, which we will use to obtain bounds on the discrepancy of $\{n/N : n \in \mc{E}_b(N)\} \subseteq [0,1]$ (in the sense of uniform distribution theory).
\begin{prop} \label{prop:unifExpsumEb}
Let $N$ be a large prime. Let $k \in \mb{Z} \bk \{0\}$ with $(k,N) = 1$. Let $q^{\log q} \leq T \leq e^{q^{1/5}}$. Then for any $1 \leq b < N$ 
$$
\left|\sum_{n \in \mc{E}_b(N)} e(kn/N)\right| \ll NT^{-\frac{1}{20}+o(1)} + (\log T)^2 2^{q^2}|\mc{E}(N)|.
$$
\end{prop}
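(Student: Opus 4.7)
The plan is to combine Lemma \ref{lem:shiftEb} with the friable estimate from Proposition \ref{prop:recur} to show that the exponential sum $\sum_{n \in \mc{E}_b(N)} e(kn/N)$ is approximately invariant under the dilation $n \mapsto an \pmod{N}$ for every $q$-friable $a \leq T$. Averaging this approximate invariance over all such $a$, swapping the order of summation, then reduces the problem to bounding a friable exponential sum over $a \leq T$ evaluated at the rational $\alpha = kn/N$, where the work of Harper \cite{Har} and Fouvry--Tenenbaum \cite{FoTe} supplies cancellation.

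First I would fix any $q$-friable $a \leq T$ (with $(a,N)=1$ automatically, as $N$ is prime and $a<N$) and apply Lemma \ref{lem:shiftEb} in the form
$$
\left|\sum_{n \in \mc{E}_b(N)} e(kn/N) - \sum_{n \in \phi_a^{-1}(\mc{E}_b(N))} e(kn/N)\right| \leq |\mc{E}_b(N) \triangle \phi_a^{-1}(\mc{E}_b(N))| \leq 2|\mc{E}_a(N)|.
$$
Reparameterising the second sum by $m = \phi_a(n)$, which is a bijection on $\mb{Z} \cap (0,N)$ by Lemma \ref{lem:phidInj}, and using the periodicity of $e(\cdot)$ to rewrite $e(kn/N) = e(k a^{-1} m/N)$, followed by the innocuous relabelling $k \mapsto ka$ inside this identity, I obtain
$$
\sum_{n \in \mc{E}_b(N)} e(kan/N) = \sum_{m \in \mc{E}_b(N)} e(km/N) + O(|\mc{E}_a(N)|).
$$
Averaging this identity over $q$-friable $a \in [1,T]$, writing $\Psi(T,q) := |\{a \leq T : P^+(a) \leq q\}|$, and bounding $|\mc{E}_a(N)| \ll (\log T)^2 2^{q^2} |\mc{E}(N)|$ uniformly via Proposition \ref{prop:recur}, I arrive at
$$
\sum_{m \in \mc{E}_b(N)} e(km/N) = \frac{1}{\Psi(T,q)} \sum_{n \in \mc{E}_b(N)} \sum_{\ss{a \leq T \\ P^+(a) \leq q}} e(kan/N) + O\!\left((\log T)^2 2^{q^2} |\mc{E}(N)|\right),
$$
after swapping the order of summation in the main term. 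The error term here already accounts for the second term in the claimed bound.

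To handle the main term, I would appeal to bounds for friable exponential sums at rationals. For each fixed $n$ with $1 \leq n < N$, the number $kn/N$ is a rational with denominator exactly $N$, since $N$ is prime and $(k,N)=1$. Because $T \leq e^{q^{1/5}}$ and $q \leq \sqrt{\log N}$ force $T = N^{o(1)}$, the denominator $N$ is much larger than $T$, placing $kn/N$ in the well-approximable regime of the bounds of Harper \cite{Har} and Fouvry--Tenenbaum \cite{FoTe}. These should give, uniformly in $n$, a cancellation of the form
$$
\left|\sum_{\ss{a \leq T \\ P^+(a) \leq q}} e(kan/N)\right| \ll \Psi(T,q) \cdot T^{-1/20 + o(1)},
$$
throughout the range $q^{\log q} \leq T \leq e^{q^{1/5}}$, the lower constraint serving to keep $\Psi(T,q)$ large enough for this $T^{-1/20+o(1)}$ saving to be meaningful relative to $\Psi(T,q)$ itself. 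Summing trivially over $|\mc{E}_b(N)| \leq N$ then contributes $NT^{-1/20+o(1)}$, giving the first term of the claimed bound.

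The main obstacle is the final step: verifying the precise friable exponential sum bound with exponent $\tfrac{1}{20}$ in the stated range of $(T,q)$. The exponent $1/20$, together with the apparently specific constraints $q^{\log q} \leq T \leq e^{q^{1/5}}$, is dictated entirely by the trade-off between the strength of the Harper / Fouvry--Tenenbaum cancellation one can extract at denominator $N$ and the behaviour of $\Psi(T,q)/T$ as the friability parameter tightens. Everything else in the proof is a fairly mechanical combination of the symmetric-difference identity of Lemma \ref{lem:shiftEb} with the recursive bound of Proposition \ref{prop:recur}, wrapped around an averaging and summation swap.
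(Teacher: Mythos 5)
Your overall architecture matches the paper's: approximate invariance of $\sum_{n \in \mc{E}_b(N)} e(kn/N)$ under $n \mapsto an$ for friable $a$ (via Lemma \ref{lem:shiftEb} and Proposition \ref{prop:recur}), then an averaging and swap of summation reducing matters to the inner friable sums $\sum_{a \in S(T,q)} e(kan/N)$. The error term $(\log T)^2 2^{q^2}|\mc{E}(N)|$ is handled correctly.

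However, the final step contains a genuine gap: the claimed bound $\bigl|\sum_{a \in S(T,q)} e(kan/N)\bigr| \ll \Psi(T,q)\,T^{-1/20+o(1)}$ \emph{uniformly in $n$} is false, and the reasoning behind it (``the denominator $N$ is much larger than $T$, placing $kn/N$ in the well-approximable regime'') is a misconception. What matters for Harper and Fouvry--Tenenbaum is not the exact denominator of $kn/N$ but the denominator $m$ and quality $\Delta$ of its best Dirichlet approximation $c/m$ with $m$ up to a threshold $T_0$ comparable to $T$. If $kn \equiv j \pmod{N}$ with $|j|$ small (say $|j| \leq N/(2T)$), then $e(kan/N) = e(aj/N)$ with $|aj/N| \leq 1/2$ for all $a \leq T$, and the sum exhibits essentially no cancellation; here the relevant approximation is $c/m = 0/1$ and $\tilde m = \max\{m, \Delta T\}$ is bounded, so neither theorem saves anything. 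More generally, no cancellation is available whenever $\{kn/N\}$ lies in a major arc around a rational with small denominator. The paper's proof therefore splits $\mc{E}_b(N)$ into three pieces according to the Dirichlet approximation of $\{kn/N\}$: a major-arc set $\mc{B}$ (denominators $m \leq M_1$), a medium set $\mc{S}$ (handled by Harper), and a minor-arc set $\mc{L}$ (handled by Fouvry--Tenenbaum). The set $\mc{B}$ is not controlled by cancellation at all but by a counting argument, $|\mc{B}| \ll N M_1/T_0$, exploiting that $(k,N)=1$ makes $n \mapsto \{kn/N\}$ equidistribute the integers $n < N$ over the arcs. Without this decomposition and the trivial bound on $|\mc{B}|$, your argument cannot close; optimising $M_1$ against the Harper exponent is also where the specific $T^{-1/20+o(1)}$ comes from.
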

\begin{rem}
Observe that the left-hand side is independent of $T$, which makes this estimate applicable in principle to the entire range of $1 \leq b < N$. 
\end{rem}
The proof of Proposition \ref{prop:unifExpsumEb} is built on the following simple consequence of Lemma \ref{lem:shiftEb}. Given $1 \leq y \leq x$ we recall the standard notation
$$
S(x,y) := \{n \leq x : P^+(n) \leq y\}, \quad \Psi(x,y) := |S(x,y)|.
$$
\begin{lem} \label{lem:approxinvar}
Let $2 \leq q \leq \sqrt{\log N}$ and let $q \leq T < N$. Then for any $b \geq 2$ 
and any $1$-bounded, $N$-periodic function $g: \mb{Z}/N\mb{Z} \ra \mb{C}$ we have
$$
\sum_{n \in \mc{E}_b(N)} g(n) = \sum_{n \in \mc{E}_b(N)} \left(\frac{1}{\Psi(T,q)} \sum_{a \in S(T,q)} g(an)\right) + O\left((\log T)^22^{q^2}|\mc{E}(N)|\right).
$$
\end{lem}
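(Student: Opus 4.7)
The plan is to view the inner average on the right-hand side as a smoothed version of $g$ produced by multiplicative shifts in the argument, and to exploit the near-invariance of $\mc{E}_b(N)$ under such shifts by $q$-friable integers afforded by Lemma \ref{lem:shiftEb}, combined with the friable upper bound on $|\mc{E}_a(N)|$ supplied by Proposition \ref{prop:recur}.

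First, fix any $a \in S(T,q)$. Since $P^+(a) \leq q \leq \sqrt{\log N} < N$ and $N$ is prime, $(a,N) = 1$. By the $N$-periodicity of $g$ and the congruence $\phi_a(n) \equiv an \pmod{N}$, one has $g(an) = g(\phi_a(n))$ for every $1 \leq n < N$. Since $\phi_a$ is a bijection on $\mb{Z} \cap (0,N)$ by Lemma \ref{lem:phidInj}, the change of variables $m = \phi_a(n)$ gives
$$
\sum_{n \in \mc{E}_b(N)} g(an) = \sum_{m \in \phi_a(\mc{E}_b(N))} g(m).
$$
Using $|g| \leq 1$ together with the fact that $\phi_a^{-1}$ is a bijection on $\mb{Z}\cap(0,N)$ (hence preserves the cardinality of symmetric differences), we deduce
$$
\left|\sum_{n \in \mc{E}_b(N)} \bigl( g(an) - g(n)\bigr)\right| \leq |\phi_a(\mc{E}_b(N)) \triangle \mc{E}_b(N)| = |\phi_a^{-1}(\mc{E}_b(N)) \triangle \mc{E}_b(N)| \leq 2|\mc{E}_a(N)|,
$$
where the final bound is Lemma \ref{lem:shiftEb} (assuming also $(b,N)=1$, which again holds since $N$ is prime and $b<N$ in the intended applications).

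Next, averaging this estimate over $a \in S(T,q)$ and applying the second bound of Proposition \ref{prop:recur} — which gives $|\mc{E}_a(N)| \ll (\log a)^2 2^{q^2} |\mc{E}(N)| \leq (\log T)^2 2^{q^2} |\mc{E}(N)|$ uniformly for $a \in S(T,q)$ — the aggregate error is
$$
\frac{1}{\Psi(T,q)} \sum_{a \in S(T,q)} 2|\mc{E}_a(N)| \ll (\log T)^2 2^{q^2} |\mc{E}(N)|.
$$
Swapping the order of summation on the right-hand side of the claim then yields the desired identity with the stated error term.

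The proof is a direct assembly of ingredients already established in the section, so I do not foresee any meaningful obstacle. The only point that warrants a moment of care is the conversion from $\phi_a(\mc{E}_b(N)) \triangle \mc{E}_b(N)$ to $\phi_a^{-1}(\mc{E}_b(N)) \triangle \mc{E}_b(N)$ so as to match the hypothesis of Lemma \ref{lem:shiftEb}; this is immediate once one applies the bijection $\phi_a^{-1}$ to both components of the symmetric difference.
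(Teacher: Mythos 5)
Your argument is correct and is essentially the paper's own proof: both reduce the difference $\sum_{n\in\mc{E}_b(N)}(g(an)-g(n))$ to the cardinality of the symmetric difference $\mc{E}_b(N)\triangle\phi_a^{-1}(\mc{E}_b(N))$ via bijectivity of $\phi_a$ and $N$-periodicity of $g$, bound it by $2|\mc{E}_a(N)|$ using Lemma \ref{lem:shiftEb}, apply Proposition \ref{prop:recur} for $q$-friable $a$, and average over $a\in S(T,q)$. The only cosmetic difference is that you re-index $\sum_n g(an)$ over $\phi_a(\mc{E}_b(N))$ while the paper re-indexes $\sum_n g(n)$ over $\phi_a^{-1}(\mc{E}_b(N))$; these are mirror images of the same step.
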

\begin{proof}
Let $a \in S(T,q)$. Proposition \ref{prop:recur} shows that $|\mc{E}_a(N)| \ll (\log T)^{2}2^{q^2}|\mc{E}(N)|$. As $g$ is periodic modulo $N$ and $\phi_a(m) \equiv am\pmod{N}$ it follows that $g(am) = g(\phi_a(m))$ for all $m \in \mb{Z} \cap (0,N)$. Combining Lemmas \ref{lem:phidInj} and \ref{lem:shiftEb}, together with the fact that $|g| \leq 1$, yields
\begin{align*}
\left|\sum_{n \in \mc{E}_b(N)} g(n) - \sum_{n \in \mc{E}_b(N)} g(an)\right| &= \left|\sum_{m \in \phi_a^{-1}(\mc{E}_b(N))} g(am) - \sum_{n \in \mc{E}_b(N)} g(an)\right|\\
&\leq |\mc{E}_b(N) \triangle \phi_a^{-1}(\mc{E}_b(N))| \leq 2|\mc{E}_a(N)| \\
&\ll (\log T)^{2}2^{q^2}|\mc{E}(N)|.
\end{align*}
Since this is uniform in $a \in S(T,q)$, the claim follows upon averaging over all $a \in S(T,q)$ and swapping orders of summation.
\end{proof}
We will apply Lemma \ref{lem:approxinvar} to the non-trivial additive characters $g(n) = e(kn/N)$, suggesting the importance of having to hand bounds on exponential sums over friable numbers. We will use two such results from the literature. 
The first is a special case of an estimate 
due to Harper \cite[Thm. 1]{Har}.
\begin{thm1}[Harper]
Fix $A > 3$ and let $(\log x)^A \leq y \leq x^{1/3}$. Let $\theta \in \mb{R}$, and let $c/m$ be a reduced rational with $|\theta - c/m| = \Delta/m$. Set $\tilde{m} := \max\{m,\Delta x\}$ and suppose that $\tilde{m} \leq \frac{1}{10}(x/y^3)^{1/2}$. Then as $x \ra \infty$,
$$
\left|\sum_{\ss{n \in S(x,y)}} e(n\theta)\right|\ll \frac{\Psi(x,y)}{\tilde{m}^{1/2(1-3/A)-o(1)}} (\log x)^{9/2}.
$$
\end{thm1}
\begin{proof}
When $\theta = c/m + \delta$, and $4m^2y^3 (1+|\delta|x)^2 \leq x$,  Theorem 1 of \cite{Har} yields the above estimate with the right-hand side replaced by
$$
\frac{\Psi(x,y)}{(m(1+|\delta| x))^{1/2}}(m(1+|\delta|x))^{\tfrac{3}{2}(1-\alpha(x,y))} u^{3/2}\log u \log x \sqrt{\log (1+|\delta|x) \log(my)},
$$
wherein $u = (\log x)/\log y$ and, as $y \geq (\log x)^A$,
$$
\alpha(x,y) = 1-\frac{\log(u\log(u+1))}{y} + o(1) \geq 1-\frac{1}{A} + o(1)
$$ 
(see \cite[Sec. 2.1]{Har} for this latter estimate). We therefore have $u \leq \log x$, $y,m \leq x$ and $|\delta| = \Delta/m$, so that $m(1+|\delta|x) \leq 2 \tilde{m}$. Thus, under the condition $\tilde{m} \leq \frac{1}{10}(x/y^3)^{1/2}$, the above bound holds, and
$$
u^{3/2} \log u \log x \sqrt{(\log(1+|\delta|x)\log(my)} \ll (\log x)^{3/2 + 2 + 1} = (\log x)^{9/2},
$$
whence follows the claim.
\end{proof}
When $y= x^{o(1)}$ and $\delta > 0$ is small enough, the range $x^{1/2-\delta} < m \leq x^{1/2}$ is not suitably covered by Harper's theorem. As Harper notes in \cite{Har} (see the comments following the statement of Theorem 1 there), for this range one can instead use an earlier result of Fouvry-Tenenbaum \cite{FoTe}.
\begin{thm1}[Th\'{e}or\`{e}me 13 of \cite{FoTe}]
Let $3 \leq y \leq \sqrt{x}$ and let $\theta = \frac{b}{m} + \delta$, for some $1 \leq b \leq m$ with $(b,m) = 1$. Then
$$
\left|\sum_{\ss{n \in S(x,y)}} e(n\theta)\right| \ll x(1+|\delta|x)(\log x)^3 \left(y^{1/2}x^{-1/4} + \frac{1}{\sqrt{m}} + \sqrt{\frac{my}{x}}\right).
$$
\end{thm1}
\begin{proof}[Proof of Proposition \ref{prop:unifExpsumEb}]
Let $k \in \mb{Z} \bk \{0\}$ with $(k,N) = 1$. Applying Lemma \ref{lem:approxinvar} with $g(n) = e(kn/N)$,
we obtain 
\begin{align}\label{eq:averagesmooth}
\sum_{n \in \mc{E}_b(N)} e(kn/N) 
&= \sum_{n \in \mc{E}_b(N)} \left(\frac{1}{\Psi(T,q)} \sum_{\ss{a \in S(T,q)}} e(kna/N)\right) + O((\log T)^22^{q^2}|\mc{E}(N)|).
\end{align}
Define $T_0 := q^{10}\sqrt{T}$ and $M_2 := T/T_0 \leq \sqrt{T}$. By the Dirichlet approximation theorem, for every $\alpha \in [0,1]$ there are $1 \leq m \leq T_0$ and $1 \leq c \leq m$ coprime to $m$ such that 
$$
\alpha \in \left(\frac{c}{m} - \frac{1}{mT_0}, \frac{c}{m} + \frac{1}{mT_0}\right).
$$ 
We thus decompose $\mc{E}_b(N)$ in the following way. Let $1 \leq M_1 < M_2$ be a parameter to be chosen later, and set
$$
\mc{M}_1 := \bigcup_{1 \leq m \leq M_1}\,  \bigcup_{\ss{c \pmod{m} \\ (c,m) = 1}} \left(\frac{c}{m} - \frac{1}{mT_0}, \frac{c}{m} + \frac{1}{mT_0}\right), \quad \mc{M}_2 := \bigcup_{M_1 < m \leq M_2} \, \bigcup_{\ss{c \pmod{m} \\ (c,m) = 1}} \left(\frac{c}{m} - \frac{1}{mT_0}, \frac{c}{m} + \frac{1}{mT_0}\right).
$$
Finally, define
$$
\mc{B} := \{n \in \mc{E}_b(N) : \{nk/N\} \in \mc{M}_1\}, \quad \mc{S} := \{n \in \mc{E}_b(N) : \{nk/N\} \in \mc{M}_2\}, \quad \mc{L} := \mc{E}_b(N) \bk (\mc{B} \cup \mc{S}).
$$ 
We observe that since $(k,N) = 1$, 
$$
|\mc{B}| \leq \sum_{1 \leq m \leq M_1} \sum_{c \pmod{m}} \left|\mb{Z} \cap \left(\frac{Nc}{m} - \frac{N}{mT_0}, \frac{Nc}{m} + \frac{N}{mT_0}\right)\right| \ll \sum_{1 \leq m \leq M_1} \sum_{c \pmod{m}} \frac{N}{mT_0} \ll \frac{NM_1}{T_0}.
$$
When $n \in \mc{L}$ we have 
$$
|\delta|T = \left|\left\{\frac{kn}{N}\right\} - \frac{c}{m}\right|T \leq \frac{T}{mT_0} < \frac{T}{M_2T_0} = 1.
$$
As $q \leq T^{o(1)}$, applying the Fouvry-Tenenbaum result yields
$$
\max_{n \in \mc{L}} \frac{1}{\Psi(T,q)} \left| \sum_{\ss{a \in S(T,q)}} e(kan/N)\right| \ll \frac{T(\log T)^{3}}{\Psi(T,q)}\left(q^{1/2}T^{-1/4} + \frac{1}{\sqrt{M_2}} + \left(\frac{qT_0}{T}\right)^{1/2}\right) \ll \frac{T^{3/4 + o(1)}}{\Psi(T,q)}.
$$
Let $A := 5$. Since $q \geq (\log T)^A$, we have 
$$
u = \frac{\log T}{\log q} \leq \frac{\log T}{A\log\log T}.
$$ 
Thus, provided $N$ (and thus $q$) is sufficiently large we apply a smooth numbers estimate due to Canfield, Erd\H{o}s and Pomerance \cite{CEP} to obtain
$$
\Psi(T,q) = T u^{-(1+o(1))u}  \gg T^{1-\tfrac{1}{A} + o(1)}.
$$
Thus, we obtain
$$
\max_{n \in \mc{L}} \frac{1}{\Psi(T,q)} \left| \sum_{\ss{a \in S(T,q)}} e(kan/N)\right| \ll T^{\tfrac{1}{A}-\tfrac{1}{4}+o(1)}.
$$
Finally, consider when $n \in \mc{S}$. Since
$$
\tilde{m} = \max\{m,\Delta T\} \leq \max\{M_2, T/T_0\} = T/T_0 \leq \sqrt{T}/q^{10} \leq \tfrac{1}{10} (T/q^3)^{1/2},
$$
we may apply Harper's theorem to obtain
$$
\max_{n \in \mc{S}} \frac{1}{\Psi(T,q)} \left|\sum_{\ss{a \in S(T,q)}} e(kan/N) \right| \ll \frac{(\log T)^{9/2}}{\tilde{m}^{\tfrac{1}{2}(1-3/A)-o(1)}} \leq \frac{(\log T)^{9/2}}{M_1^{\tfrac{1}{2}(1-3/A)-o(1)}}.
$$
Collecting these bounds together, we get from \eqref{eq:averagesmooth} that
\begin{align*}
&\left|\sum_{n \in \mc{E}_b(N)} e(kan/N)\right| \\
&\leq |\mc{B}| + |\mc{E}_b(N)| \left(\max_{n \in \mc{S}} \frac{1}{\Psi(T,q)} \left|\sum_{\ss{a \in S(T,q)}} e(kan/N)\right| + \max_{n \in \mc{L}} \frac{1}{\Psi(T,q)} \left|\sum_{\ss{a \in S(T,q)}} e(kan/N)\right|\right) + (\log T)^2 2^{q^2}|\mc{E}(N)| \\
&\ll \frac{NM_1}{T_0} + |\mc{E}_b(N)|\left(\frac{(\log T)^{9/2}}{M_1^{\tfrac{1}{2}(1-3/A)-o(1)}} + T^{\tfrac{1}{A}-\tfrac{1}{4}+o(1)}\right) + (\log T)^22^{q^2}|\mc{E}(N)|  \\
&\ll NT^{o(1)}\left(\frac{M_1}{T_0} + M_1^{-\tfrac{1}{2}(1-3/A)} + T^{\tfrac{1}{A}-\tfrac{1}{4}}\right) + (\log T)^2 2^{q^2}|\mc{E}(N)|.
\end{align*}
We choose $M_1 = T^{1/4}$ to obtain
$$
\left|\sum_{n \in \mc{E}_b(N)} e(kan/N)\right| \ll NT^{o(1)}\left(T^{\frac{1}{5} - \frac{1}{4}} + T^{-\tfrac{1}{8}(1-3/5)}\right)  + (\log T)^2 2^{q^2}|\mc{E}(N)| \ll NT^{-\tfrac{1}{20}+o(1)} + (\log T)^2 2^{q^2}|\mc{E}(N)|,
$$
as claimed.
\end{proof}
We are now in a position to prove Proposition \ref{prop:approxDil}.
\begin{proof}[Proof of Proposition \ref{prop:approxDil}]
Let $1 \leq K < N$ be a parameter to be chosen, and let $2 \leq r \leq T^{1/40}$. By the Erd\H{o}s-Tur\'{a}n inequality \cite[Cor. 1.1]{TenLec}, we have
$$
\sup_{I \subseteq [0,1]} \left||\{n \in \mc{E}_r(N) : n/N \in I\}| - |I| |\mc{E}_r(N)|\right| \ll \frac{|\mc{E}_r(N)|}{K} + \sum_{1 \leq k \leq K} \frac{1}{k} \left|\sum_{n \in \mc{E}_r(N)} e(nk/N)\right|.
$$
Since $N$ is prime, for every $1 \leq k \leq K$ we have $(k,N) = 1$. Thus, applying the previous lemma and specialising to $I = (0,1/r)$, we have
$$
\left||\{n \in \mc{E}_r(N) : n\in (0,N/r)\}| - \frac{1}{r}|\mc{E}_r(N)|\right| \ll \frac{|\mc{E}_r(N)|}{K} + (\log K) \left(N T^{-1/20+o(1)} + (\log T)^2 2^{q^2} |\mc{E}(N)|\right).
$$
But as remarked in \eqref{eq:Edinit}, we have $\mc{E}_r(N) \cap (0,N/r) = \emptyset$, so that upon rearranging we get
$$
|\mc{E}_r(N)| \ll rN\left(\frac{1}{K} + \frac{\log K}{T^{1/20+o(1)}}\right) + r(\log K)(\log T)^2 2^{q^2} |\mc{E}(N)|.
$$
We select $K = T^{1/20}$, so as $\log T \leq q$ and $r \leq T^{1/40}$ we conclude that
$$
|\mc{E}_r(N)| \ll NT^{-1/40+o(1)} + 2^{(1+o(1))q^2} |\mc{E}(N)|.
$$
Replacing $T$ by $T^{1/40}$ yields the statement of Proposition \ref{prop:approxDil}.
\end{proof}
\section{Proofs of the main results}
In this section we prove Theorems \ref{thm:Shus} and \ref{thm:LGcond}.  
\subsection{Proof of Theorem \ref{thm:LGcond}} \label{subsec:prop14}
Theorem \ref{thm:LGcond} is a consequence of Proposition \ref{prop:keyLowerBd}, which we may now obtain.
\begin{proof}[Proof of Proposition \ref{prop:keyLowerBd}]
Assume GRH. We begin with the proof of the first claim. Assume for the sake of contradiction that there is some $C > 0$ such that for infinitely many $N$,
$$
|\mc{L}_{\lambda}(N)| > N-Ne^{-C(\log \log N)^{6}}.
$$
Then there is an $\eta \in \{-1,+1\}$ such that
$$
|\mc{E}(N)| = |\{n < N : \lambda(n) = \eta\lambda(N-n)\}| = \frac{1}{2}\left(N-1 + \eta \mc{L}_{\lambda}(N)\right) < Ne^{-C(\log \log N)^{6}}.
$$
Now, let 
$$
q := \frac{C}{10}(\log \log N)^{6}, \quad P := e^{\tfrac{1}{40}q^{1/5}},
$$ 
so that if $N$ is sufficiently large then $P \geq (\log N)^{100}$. By Proposition \ref{prop:approxDil} whenever $P/2 < p \leq P$ we obtain 
$$
|\mc{E}_p(N)| \ll NP^{-1+o(1)} + 2^{(1+o(1)) q^2} |\mc{E}(N)|  = o(N).
$$
On the other hand, by Proposition \ref{prop:GRHavg}, we can find $p \in (P/2,P]$ such that $|\mc{E}_p(N)| \gg N$, which yields a contradiction. The first claim follows. \\
To prove the second claim, it suffices to observe that for each $(\eta_1,\eta_2) \in \{-1,+1\}^2$,
\begin{align*}
|\{n < N : (\lambda(n),\lambda(N-n)) = (\eta_1,\eta_2)\}| &= \frac{1}{4} \sum_{n < N} (1+\eta_1\lambda(n))(1+\eta_2\lambda(N-n)) \\
&= \frac{1}{4} \left(N-1 + \eta_1\eta_2 \mc{L}_{\lambda}(N) + (\eta_1+\eta_2) \sum_{n < N} \lambda(n)\right) \\
&\geq \frac{1}{4}\left(N-1 - |\mc{L}_{\lambda}(N)| - 2\left|\sum_{n < N} \lambda(n)\right|\right).
\end{align*}
Combining the first claim with the prime number theorem (with Vinogradov-Korobov error term), there are constants $C_1,C_2 > 0$ such that this last expression is
$$
\geq C_1 Ne^{-C(\log\log N)^{6}} - C_2 Ne^{-(\log N)^{3/5}(\log\log N)^{-1/5}} \gg Ne^{-C(\log \log N)^{6}}
$$
for large enough $N$, and the second claim follows.
\end{proof}
\begin{rem} \label{rem:GRHimp}
We show here that, instead of GRH the assumption that $L(s,\chi) \neq 0$ for all $\chi \neq \chi_0$ modulo $N$ in the rectangle
$$
\alpha < \text{Re}(s) \leq 1, \, |\text{Im}(s)| \leq (\log N)^3
$$
with $\alpha = 1-(\log N)^{-c}$ and $c \in (0,3/50)$, suffices to prove our two main theorems. \\
Assume that $|\mc{E}(N)| \leq Ne^{-(\log N)^{3/5}}$. We show that we may get a contradiction under these conditions, which suffices in the proof of Proposition \ref{prop:keyLowerBd} above. Following the proof of Proposition \ref{prop:GRHavg}, we have $|\mc{E}_p(N)| \gg N$ for some $P < p \leq 2P$, as long as $P = P(N) \ra \infty$ and
$$
\frac{(\log N)(\log\log N)}{P^{1-\alpha}} \ll \frac{1}{\log\log N},
$$
say. This last condition is fulfilled as long as we may select 
$$
\log P \geq 2(\log N)^c \log\log N, 
$$
taking $N$ sufficiently large. On the other hand, Proposition \ref{prop:approxDil} allows for a bound of the shape
$$
|\mc{E}_r(N)|  \ll 2^{(1+o(1))q^2}|\mc{E}(N)| + Nr^{-1+o(1)} = o(N),
$$
on taking $q = \tfrac{1}{100}(\log N)^{3/10}$, $R := e^{0.01 q^{1/5}}$ and $R/2 < r \leq R$. Then $\log R  \gg q^{1/5} = (\log N)^{3/50-o(1)}$. Thus, provided $c \in (0,3/50)$ the choice of $P = R/2$ is admissible in giving a contradiction, provided $N$ is sufficiently large.
\end{rem}
Having proven Proposition \ref{prop:keyLowerBd}, we may now obtain Theorem \ref{thm:LGcond}.
\begin{proof}[Proof of Theorem \ref{thm:LGcond} ]
For each $\mbf{\eta} := (\eta_1,\eta_2) \in \{-1,+1\}^2$ and prime $p \geq 3$ define
$$
T_{\mbf{\eta}}(p) := |\{n < p : (\lambda(n) , \lambda(p-n)) = (\eta_1,\eta_2)\}|.
$$
By Proposition \ref{prop:keyLowerBd}, 
$$
\min_{\mbf{\eta} \in \{-1,+1\}^2}T_{\mbf{\eta}}(p) \gg pe^{-C(\log \log p)^{6}},
$$
so in particular, there is $p_0 \geq 2$ such that for every prime $p \geq p_0$,
$$
\min_{\mbf{\eta} \in \{-1,+1\}^2} T_{\mbf{\eta}}(p) \geq 1.
$$
Assume now that $N$ is not $p_0$-friable. Then there is a prime $p > p_0$ such that $N = pN'$, for some $N' \in \mb{N}$. Set $\eta := \lambda(N')$, and let $\mbf{\eps} \in \{-1,+1\}^2$. By the above assumption, we may find $1 \leq b < p$ with 
\begin{align*}
(\lambda(b), \lambda(p-b)) = (\eta \eps_1, \eta \eps_2).
\end{align*}
Let $a := bN'$, so that $N-a = N'(p-b)$. Then 
$$
\lambda(a) = \lambda(b) \lambda(N') = (\eta \eps_1) \eta = \eps_1, \quad \lambda(N-a) = \lambda(N')\lambda(p-b) = \eta (\eta \eps_2) = \eps_2.
$$
The first claim thus follows. \\
For the second claim, it is clear that if $N_0 := \prod_{p \leq p_0} p$ and if $N > N_0$ is squarefree then $N$ has a prime factor $p > p_0$. Thus, as $N$ is not $p_0$-friable, (2) must hold for $N$, and the second claim follows from the first.
\end{proof}
\subsection{Application to Shusterman's problem}
By combining Theorem \ref{thm:LGcond} with the work of \cite{Man}, we may now prove Theorem \ref{thm:Shus}.
\begin{proof}[Proof of Theorem \ref{thm:Shus}]
Let $p_0$ be the constant from Theorem \ref{thm:LGcond}. We consider four cases:
\begin{enumerate}[(i)]
\item Suppose that $N$ is not $p_0$-friable. By Theorem \ref{thm:LGcond}, we immediately have a pair $1 \leq a,b < N$ with $a+b = N$ and $\lambda(a) = \lambda(b) = -1$, as claimed. 
\item Suppose $N$ is $p_0$-friable and divisible by $8$, and write $N = 8N'$. Then as $8 = 4 + 4 = 3 + 5$, we obtain the sign patterns $\lambda(3) = \lambda(5) = -1$ and $\lambda(4) = \lambda(4) = +1$. Thus, if $\lambda(N') = -1$ then we get the choice $a =b= 4N'$, yielding $\lambda(a) = \lambda(b) = -1$, while if $\lambda(N') = +1$ then the choice $a = 3N'$, $b = 5N'$ yields $\lambda(a) = \lambda(b) = -1$.
\item Suppose $N$ is $p_0$-friable, $8 \nmid N$ and $\lambda(N) = +1$. As $N$ is even, the choice $a = b = N/2$ immediately yields $\lambda(a) = \lambda(b) = -\lambda(N) = -1$. 
\item Finally, suppose $N$ is $p_0$-friable, $8 \nmid N$ and $\lambda(N) = -1$. We claim that there is a threshold $N_0$ such that if $N \geq N_0$ then a choice of $a$ and $b$ with $\lambda(a) = \lambda(b) = -1$ exists. \\
We observe first that any $p_0$-friable, even integer $N$ with $8 \nmid N$ may be written as $N = 2^k M^2 N'$, where $1 \leq k \leq 2$, $2 \nmid N'M^2$ and $M^2$ is the largest odd square factor dividing $N$. Then $N'$ is squarefree and $p_0$-friable, so e.g. by the prime number theorem there is a constant $C > 0$ such that
$$
2^kN' \leq 4\prod_{p \leq p_0} p \leq 4e^{Cp_0}.
$$
Now, set $N_0 := 4 \cdot 11^2 \cdot e^{Cp_0}$ and assume that $N \geq N_0$. Then, writing $N = 2^k M^2N'$ as before, $M \geq 11$. As $M$ is odd, applying \cite[Thm. 1]{Man} shows there is an $1 \leq n < M/2$ such that $\lambda(n) = \lambda(M-n)$. Setting $d := M-2n \geq 1$, we get that $n = (M-d)/2$, $M-n = (M+d)/2$, so that $\lambda(M-d) = \lambda(M+d)$, and thus $\lambda(M^2-d^2) = +1$. We thus find that the pair $(M^2-d^2,d^2)$ produces the sign pattern 
$$
(\lambda(M^2-d^2), \lambda(d^2)) = (+1,+1).
$$ 
Therefore, as $\lambda(2^{k}N') = \lambda(N) = -1$ the pair $(a,b) = (2^kN'(M^2-d^2),2^kN'd^2)$ yields
\begin{align*}
&(\lambda(a),\lambda(b)) = (\lambda(2^kN'(M^2-d^2)),\lambda(2^kN' d^2)) = (-1,-1), \\
&a + b = 2^{k}N'(M^2-d^2) + 2^{k}N'd^2 = 2^{k}N'M^2 = N.
\end{align*}
\end{enumerate}
Combining these four cases, the theorem follows.
\end{proof}

\section*{Appendix A: A first moment estimate for $\nu_r(m)$}
As discussed in Remark \ref{rem:nurprob}, our recursive bound $|\mc{E}_r(N)| \leq 2^{r^2} |\mc{E}(N)|$ would improve substantially provided we could better estimate sums involving the quantity
$$
\nu_r(m) := \left|\left\{n < N : \, \exists \, 1 \leq r_1 < \cdots < r_k < r \text{ with } n = N\sum_{1 \leq j < k} \frac{(-1)^{j-1}}{r_1\cdots r_j} + \frac{(-1)^k m}{r_1\cdots r_k}\right\}\right|.
$$
Improving on the pointwise bound $\nu_r(m) \leq 2^{r-1}$ used in the proof of Proposition \ref{prop:recur}, we may show the following average bound for $\nu_r(m)$ over the interval $(N/(r+1),N/r)$, for $r \geq 1$ not too large (for comparison, our current range of $r$ in the recursive bound is $r = O(\sqrt{\log N})$).
\begin{lem1}\label{eq:nurAvg}
Let $\delta >0$, let $r$ be a sufficiently large integer (in terms of $\delta$ only), and assume that $N \geq \exp(10(\log r)^{2+\delta})$. Then 
\begin{align*}
&\sum_{N/(r+1) < m < N/r} \nu_r(m) \ll \frac{N \log r}{r}.
\end{align*}
\end{lem1}
\begin{proof}
We begin with the following observation. Recall that $\nu_r(m)$ counts integers $n < N$ such that there is a tuple $1 \leq r_1 < \cdots < r_k < r$, $k \geq 1$ with
\begin{equation}\label{eq:nrep}
n = N\sum_{1 \leq j \leq k} \frac{(-1)^{j-1}}{r_1\cdots r_j} + \frac{(-1)^k m}{r_1\cdots r_k}.
\end{equation}
Suppose $N/(r+1) < m < N/r$ and suppose $n$ arises from a tuple $\mbf{r} = (r_1,\ldots,r_k)$ with $r_1\cdots r_k > 2N/r$. Let $2 \leq k_0 \leq k$ be the index such that 
$$
r_1\cdots r_{k_0-1} \leq \frac{2N}{r} < r_1\cdots r_{k_0} \leq 2N
$$
(that $k_0 \geq 2$ follows from $N > r^2 > rr_1$). Then $n$ is the closest integer to $N\sum_{1 \leq j \leq k_0} \frac{(-1)^{j-1}}{r_1\cdots r_j}$, and so is determined by the initial segment $(r_1,\ldots,r_{k_0})$ of the tuple $(r_1,\ldots,r_k)$, and arises for at most a single $m$ (which, then, may occur in an interval outside of $(N/(r+1),N/r)$). It follows that the number of $n$ determined in this way by tuples of $r_j$ satisfying $r_1\cdots r_k > 2N/r$, and $m \in (N/(r+1),N/r)$, is bounded by the number of tuples of $r_j'$ such that $r_1'\cdots r_k' \leq 2N$. \\
On the other hand, suppose $N/(r+1) < m < N/r$ and $n$ arises from a tuple $\mbf{r}$ with $r_1\cdots r_k \leq 2N/r$. In this case, from \eqref{eq:nrep} we see that
$$
m \equiv NP_k(r_1,\ldots,r_k) \pmod{r_1\cdots r_k},
$$
where we have set
$$
P_k(r_1,\ldots,r_k) = r_1\cdots r_k \left(\sum_{1 \leq j \leq k} \frac{(-1)^{j-1}}{r_1\cdots r_j}\right) \in \mb{Z}.
$$
We may therefore upper bound the sum over $\nu_r(m)$ as
\begin{align*}
&\sum_{N/(r+1) < m < N/r} \nu_r(m) \\
&\leq \sum_{\ss{1 \leq r_1 < \cdots < r_k < r \\ r_1\cdots r_k \leq 2N/r}} |\{N/(r+1) < m < N/r : m \equiv NP_k(r_1,\cdots, r_k) \pmod{r_1\cdots r_k}\}| \\
&+ |\{1 \leq r_1 < \cdots < r_k < r : r_1\cdots r_k \leq 2N\}| \\
&=: T_1 + T_2.
\end{align*}
In estimating $T_1$ we use that
$$
|\{N/(r+1) < m < N/r : m \equiv NP_k(r_1,\cdots, r_k) \pmod{r_1\cdots r_k}\}| \leq \left(1+ \frac{N}{r_1 \cdots r_k r(r+1)}\right),
$$
so that
$$
T_1 \leq N\sum_{\ss{1 \leq r_1 < \cdots < r_k < r \\ r_1\cdots r_k \leq 2N/r}} \frac{1}{r_1\cdots r_kr(r+1)} + |\{1 \leq r_1 < \cdots < r_k < r : r_1\cdots r_k \leq 2N\}| \leq T_1' + T_2,
$$
writing $T_1'$ to denote the sum in the middle expression. To estimate $T_1'$ we use Rankin's trick in the form $1_{r_k < r} \leq \frac{r}{r_k+1}$, 
to get
\begin{align*}
T_1' &\leq \frac{N}{r+1} \sum_{1 \leq r_1 < \cdots < r_k < r} \frac{1}{r_1\cdots r_{k-1} r_k(r_k+1)} =: \frac{N}{r+1}S(r).
\end{align*}
We may decompose $S(r)$ in terms of $S(r-1)$ as
\begin{align*}
S(r) &= S(r-1) + \sum_{\ss{1 \leq r_1 < \cdots < r_{k-1} < r-1 \\ r_k = r-1}} \frac{1}{r_1\cdots r_{k-1}r_k(r_k+1)} = S(r-1) + \frac{1}{r(r-1)} \sum_{1 \leq r_1 < \cdots < r_{k-1} < r-1} \frac{1}{r_1\cdots r_{k-1}} \\
&= S(r-1) + \frac{1}{r(r-1)}\prod_{1 \leq j < r-1} \left(1+\frac{1}{j}\right) = S(r-1) + \frac{1}{r}.
\end{align*}
Iterating this over $r$, we obtain $S(r) \ll \log r$, and hence
$$
T_1' \ll \frac{N \log r}{r}.
$$
We next consider $T_2$. We split this as $T_2 = T_2' + T_2''$, where 
\begin{align*}
T_2' &:= |\{1 \leq r_1 < \cdots < r_k < r : \, r_1\cdots r_k \leq 2N, \, 1 \leq k \leq (\log r)^{1+\delta}\}|, \\
T_2'' &:= |\{1 \leq r_1 < \cdots < r_k < r : \, r_1\cdots r_k \leq 2N, \, k > (\log r)^{1+\delta}\}|.
\end{align*}
We apply Rankin's trick to $T_2'$ to get
\begin{align*}
T_2' \leq \sum_{\ss{1 \leq r_1 < \cdots < r_k < r \\ 1 \leq k \leq (\log r)^{1+\delta}}} \left(\frac{2N}{r_1\cdots r_k}\right)^{1-1/\log r} \leq (2N)^{1-1/\log r} \sum_{\ss{1 \leq r_1 < \cdots < r_k < r \\ 1 \leq k \leq (\log r)^{1+\delta}}} \frac{e^k}{r_1\cdots r_k}.
\end{align*}
Repeating the argument of $T_1'$ and invoking the assumed bound $N^{1/(2\log r)} > r(\log r)e^{(\log r)^{1+\delta}}$, we see that
$$
T_2' \leq N^{1-1/\log r} e^{(\log r)^{1+\delta}} r \sum_{1 \leq r_1 < \cdots < r_k < r} \frac{1}{r_1\cdots r_k (r_k+1)} < N^{1-1/\log r} r (\log r) e^{(\log r)^{1+\delta}} \leq N^{1-1/(2\log r)}.
$$
Finally we consider $T_2''$. For this, we first observe (following \cite{Sha}) that if $K := \lfloor (\log r)^{1+\delta}\rfloor$ then
\begin{align*}
T_2'' \leq 2N \sum_{\ss{1 \leq r_1 < \cdots < r_k < r \\ k > (\log r)^{1+\delta}}} \frac{1}{r_1\cdots r_k} \leq 2N \sum_{k \geq K} \sum_{1 \leq r_1 < \cdots < r_k < r} \frac{1}{r_1\cdots r_k} = \frac{2N}{(r-1)!} \sum_{k \geq K} \left[{r \atop k}\right], 
\end{align*}
where $\left[{r \atop k}\right]$ is an unsigned Stirling number of the first kind. By explicit estimates for Stirling numbers (see e.g. \cite{Adell}), when $k > (\log r)^{1+\delta}$ we have
\begin{equation}\label{eq:Stirupp}
\frac{\left[{r \atop k}\right]}{(r-1)!} \leq \frac{(\tau + r-1) \cdots (\tau+1)}{\tau^k(r-1)!},
\end{equation}
where $\tau = \tau(r,k) > 0$ is the unique solution to the equation
\begin{equation}\label{eq:tauDef}
k = \sum_{1 \leq j \leq r-1} \frac{\tau}{\tau+j}.
\end{equation}
We observe that, crudely,
$$
k \leq \tau \sum_{1 \leq j \leq r-1} \frac{1}{j} \leq \tau(\log r + 1),
$$
so that $\tau \geq \tfrac{1}{2}(\log r)^{\delta} \geq 20$, provided that $r$ is sufficiently large. We use this shortly. \\
Using \eqref{eq:Stirupp} and \eqref{eq:tauDef}, we have
\begin{align*}
\frac{\left[{r \atop k}\right]}{(r-1)!} &\leq \exp\left(\sum_{1 \leq j \leq r-1} \left(\log(\tau + j) - \log j\right) - k \log \tau\right) \\
&= \exp\left(\sum_{1 \leq j \leq r-1} \left(\log(1+\tau/j) - \frac{\tau\log \tau}{\tau+j}\right)\right) \\
&= \exp\left(\sum_{1 \leq j \leq r-1} \frac{\tau}{\tau+j}\left(\left(1+\frac{j}{\tau}\right)\log\left(1+\frac{\tau}{j}\right) - \log \tau\right)\right).
\end{align*}
Next, observe that $\phi_{\tau}(u) := (1+u/\tau)\log(1+\tau/u)$ is non-increasing for all $u >0$, since
$$
\phi_{\tau}'(u) = \frac{1}{\tau} \log(1+\tau/u) - \frac{1}{u} \leq \frac{1}{\tau} \cdot \frac{\tau}{u} - \frac{1}{u} \leq 0.
$$
It follows that as $\tau \geq 20$, for every $j \geq 2$ we get
$$
\phi_{\tau}(j) - \log \tau \leq \phi_{\tau}(2) - \log \tau  = \left(1+\frac{2}{\tau}\right)\left(\log(2+\tau) - \log 2\right) - \log \tau \leq -\log 2 + \frac{2}{\tau} \left(1+\log(2+\tau)\right) \leq -\frac{1}{3}.
$$
Furthermore, we have
$$
\phi_{\tau}(1) - \log \tau \leq (\log\tau) \left(1+1/\tau-1\right) + 2\log(1+1/\tau) = O\left(\frac{\log \tau}{\tau}\right).
$$
Using \eqref{eq:tauDef} once again, it follows that
\begin{align*}
\frac{\left[{r \atop k}\right]}{(r-1)!} \leq \exp\left(\sum_{1 \leq j \leq r-1} \frac{\tau}{\tau+j}\left(\phi_{\tau}(j) - \log \tau\right)\right) \ll \exp\left(-\frac{1}{3} \sum_{1\leq j \leq r-1} \frac{\tau}{\tau+j}\right) = e^{-k/3}.
\end{align*}
Using the fact that $k \geq (\log r)^{1+\delta}$, we deduce that for any $A > 0$ we get
$$
T_2'' \ll N\sum_{k \geq K} e^{-k/3} \ll N e^{-K/3} \ll_A Nr^{-A},
$$
as long as $r$ is large enough (in terms only of $\delta$ and $A$). Adding together all of these contributions, we obtain
$$
\sum_{N/(r+1) < m < N/r} \nu_r(m) \ll \frac{N\log r}{r},
$$ 
which proves the claim. 
\end{proof}

\section*{Acknowledgments}
\noindent We thank Tom Bloom for useful discussions and for finding a small error in the Appendix. We also thank Ofir Gorodetsky and Oleksiy Klurman for helpful advice and encouragement.

\section*{Dedication}
\noindent Cet article est d\'{e}di\'{e} \`{a} ma fille Gila, dont le sourire accueille le jour.

\bibliographystyle{plain}
\bibliography{ShusBib.bib}
\end{document}